
\documentclass[preprint,12pt]{elsarticle}




\usepackage{amssymb}
\usepackage{amsmath, amsfonts, amssymb}
\usepackage{amsthm}
\newtheorem{theorem}{Theorem}[section]
\newtheorem{lemma}[theorem]{Lemma}
\newtheorem{proposition}[theorem]{Proposition}
\newtheorem{corollary}[theorem]{Corollary}
\theoremstyle{definition}
\newtheorem{definition}[theorem]{Definition}
\newtheorem{example}[theorem]{Example}

\theoremstyle{remark}
\newtheorem{remark}[theorem]{Remark}

\journal{}

\begin{document}

\begin{frontmatter}



\title{A generalization of order continuous operators}


\author{Mehrdad Bakhshi} 
\author{ Kazem Haghnejad Azar}

\address{Department  of  Mathematics  and  Applications, Faculty of Sciences, University of Mohaghegh Ardabili, Ardabil, Iran.\\
Email: mehrdad.bakhshi@uma.ac.ir \\	
Email: haghnejad@uma.ac.ir}

\begin{abstract}
Let $E$ be a sublattice of a vector lattice $F$.
A net $\{ x_\alpha \}_{\alpha \in \mathcal{A}}\subseteq E$ is said to be $ F $-order convergent to a vector $ x \in E$  (in symbols $ x_\alpha \xrightarrow{Fo} x $ in $E$), whenever there exists a net $ \{y_\beta\}_{\beta \in \mathcal{B}} $  in $F $ satisfying 
$ y_\beta\downarrow 0 $ in $F$ and for each $\beta$, there exists $\alpha_0$ such that $ \vert x_\alpha - x \vert \leq y_\beta $ whenever $ \alpha \geq \alpha_0 $.
In this manuscript, first we study some properties of $F$-order convergence nets and we extend some results  to the general cases. Let $E$ and $G$ be  sublattices of vector lattices $F$ and $H$ respectively.  We introduce $FH$-order continuous operators, that is,  an operator $T$ between two vector lattices $E$ and $G$  is said to be $FH$-order continuous, if $x_\alpha \xrightarrow{Fo} 0$ in $E$ implies $Tx_\alpha \xrightarrow{Ho} 0$ in $G$. We will study some properties of this new classification of operators and its relationships with order continuous operators. 
\end{abstract}



\begin{keyword}
Order convergence, $F$-order convergent, $FH$-order continuous operator.


\MSC[2010] 47B65 \sep 46B40 \sep 46B42
\end{keyword}

\end{frontmatter}


\section{Introduction and Preliminaries}
To state our result, we need to fix some notation and recall some definitions. A vector lattice E is an ordered vector space in which $\sup(x, y)$ exists for every
$x, y \in E$. A subspace $E$ of a vector lattice $F$ is said to be a sublattice if for every pair of elements $a, b$ of $E$ the supremum of $ a $ and $ b $ taken in $F$ belongs to $E$. A vector
lattice is said to be Dedekind complete (resp. $\sigma$-complete) if every nonempty subset (resp. countable subset) that is bounded from above has a supremum.\\
A sublattice $E$ of a vector lattice $F$ is said to be:
\begin{description}
	\item[order dense] if for every $0 < x \in F$ there exists $0 < y \in E $ such that $y \leq x$.
	\item[majorizing] if for every $x \in F$ there exists $y \in E $ such that $x \leq y$.
	\item[regular] if for every subset $A$ of $E$, $\inf{A}$ is the same in $F$ and in $E$ whenever $\inf{A}$ exists in $E$.
\end{description}
A Dedekind complete space $F$ is said to be a Dedekind completion of the vector lattice $E$ whenever $E$ is lattice isomorphic
to a majorizing order dense sublattice of $F$.
Recall that a non-zero element $ a \in E^+ $ is an \textbf{atom} iff the ideal $I_a$ consists only of the scalar multiples of $a$. 
Let $E$ be a vector lattice.  A net $\{ x_\alpha \}_{\alpha \in \mathcal{A}}\subseteq E$ is said to be order convergent (in short {\bf $o$-convergent}) to a vector $ x\in E$  (in symbols $ x_\alpha \xrightarrow{o} x $ ), whenever there exists a net  $ \{y_\beta\}_{\beta \in \mathcal{B}} $  in $E $ satisfying $ y_\beta\downarrow 0 $ and for each $\beta$ there exists $\alpha_0$ such that $ \vert x_\alpha - x \vert \leq y_\beta $ whenever $\alpha \geq \alpha_0$. Let $\{x_n\}$ be a sequence in a vector lattice $E$. Consider the sequence $\{ a_n \}$ of Ces$\acute{a}$ro means of $\{x_n\}$, defined by $a_n=\frac{1}{n}\sum_{k=1}^n x_k$.\\
Let $E$, $G$ be vector lattices. An operator $T:E\rightarrow G$ is said to be order bounded if it maps each order bounded subset of $E$ into order bounded subset of $G$. The collection of all order bounded operators from a vector lattice $E$ into a vector lattice $G$ will be denoted by ${L}_b(E,G)$. The vector space $E^\sim$ of all order bounded linear functionals on vector lattice      $E$ is called the {order dual} of $E$, i.e.,  $E^\sim =L_b(E ,\mathbb{R})$. Let  
$A$
be a  subset  of vector lattice        
$E$ and 
$Q_E$ be the natural mapping from $E$ into  $E^{\sim \sim}$.
If $Q_E(A)$
is  order  bounded  in 
$E^{\sim \sim}$, 
then  
$A$
is  said  to $b$-order  bounded  in
$E$. The concept of $b$-order bounded was first time itroduced by Alpay, Altin and Tonyali, see \cite{alpay2003property}. 
It  is  clear  that  every  order  bounded  subset  of
$E$
is  
$b$-order bounded. However, the  converse is    not  true  in general. For example,  the standard basis of $c_0$, 
$A= \{e_n \mid  n \in \mathbb{N}\}$ is
$b$-order bounded  in 
$c_0$
but  
$A$
is  not  order  bounded  in 
$c_0$.
A linear operator between two  vector lattices is order continuous (resp. $\sigma$-order continuous) if it maps order null nets (resp. sequences) to order null nets (resp. sequences). The collection of all order continuous (resp. $\sigma$-order continuous) linear operators from vector lattice $E$ into vector lattice $G$ will be denoted by $L_n(E,G)$ (resp. $L_c(E,G)$). 
For unexplained terminology and facts on  Banach lattices and positive operators, we refer the reader to \cite{1,aliprantis2006positive}.
\section{MAIN RESULTS}
In this section, $E$ is a sublattice of a vector lattice $F$. 
A net  $\{ x_\alpha \}_{\alpha \in \mathcal{A}}\subseteq E$ is said to be $ F $-order convergent (in short {\bf $Fo$-convergent}) to a vector $ x\in E$  (in symbols $ x_\alpha \xrightarrow{Fo} x $ ), whenever there exists a net $ \{y_\beta\}_{\beta \in \mathcal{B}} $  in $F $ satisfying $ y_\beta\downarrow 0 $ and for each $\beta$ there exists $\alpha_0$ such that $ \vert x_\alpha - x \vert \leq y_\beta $ whenever $\alpha \geq \alpha_0$. If $A\subseteq E$ is order bounded in $F$, we say that $A$ is $F$-order bounded, in case $F=E^{\sim\sim}$, we say that $A$ is $b$-order bounded.

It is clear that  if $E$ is regular in $F$, then every order convergence net (or order bounded set) in vector lattice $E$ is $F$-order convergent (or $F$-order bounded), but as following example the converse in general not holds. On the other hand, there is a sequence in $E$ that is order convergent  in $E$ and $F$, but is not $F$-order convergent in $E$.
\begin{example}\label{E0}
\begin{enumerate}
\item	Suppose that $E=c_{0}$ and  $F=\ell^\infty$.  The standard basis of $c_{0}$, $\{ e_n\}_{n=1}^{\infty}$  is not order convergence to zero, but  $\{ e_n\}_{n=1}^{\infty}$ is $\ell^\infty$-order convergent to zero. On the other hand $\{ e_n\}_{n=1}^{\infty}$ is not order bounded in $c_0$, but is $\ell^\infty$-order bounded in $c_0$.
\item Assume that $F$ is a set of real valued functions on $[0,1]$ of  form $f=g+h$ where $g$ is continuous and $h$ vanishes except at finitely many point. Let $E=C([0,1])$          and $f_n(t)=t^n$  where $t\in [0,1]$. It is obvious that $f_n\downarrow 0$ in $E$  and  $f_n\downarrow\chi_{\{1\}}$ in $F$, but $\{f_n\}$ is not $F$-order convergent. 
\end{enumerate}
\end{example}

It can easily be seen that a net in vector lattice $E$  have at most one $F $-order limit. The basic properties of $ Fo $-convergent are summarized in the next theorem.

\begin{theorem}
	Assume that the nets $ \{x_\alpha\} $ and  $ \{z_\gamma \} $ of a vector lattice $E$ satisfy $ x_\alpha \xrightarrow{Fo} x $ and $ z_\gamma \xrightarrow{Fo} z $. Then we have 
	\begin{enumerate}
		\item $ \vert x_\alpha \vert \xrightarrow{Fo} \vert x \vert $; $ x^{+}_\alpha \xrightarrow{Fo} x^{+} $ and $ x^{-}_\alpha \xrightarrow{Fo} x^{-} $.
		\item $ \lambda x_\alpha + \mu z_\gamma \xrightarrow{Fo} \lambda x + \mu z$ for all  $ \lambda , \mu \in \mathbb{R} $.
		\item $ x_\alpha \vee z_\gamma \xrightarrow{Fo} x \vee z $ and $ x_\alpha \wedge z_\gamma \xrightarrow{Fo} x \wedge z $.
		\item For each $y\in F$, if $ x_\alpha \leq y $ for all $ \alpha \geq \alpha_{0} $, then $ x \leq y $. 
		\item If  $ 0 \leq x_\alpha \leq  z_\alpha $ for all $\alpha$, then $ 0\leq x \leq z $.
		\item If $ P $ is order projection, then $ Px_\alpha \xrightarrow{Fo} Px $. 
	\end{enumerate} 
\end{theorem}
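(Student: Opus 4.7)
My plan is to reduce each item to a short estimate of the form $|\ast_\alpha - \ast| \le $ (some element of $F$ that, as a net in the appropriate parameters, decreases to $0$), and then invoke the definition of $Fo$-convergence directly. Throughout, let $\{y_\beta\}_{\beta \in \mathcal{B}} \subseteq F$ and $\{w_\delta\}_{\delta \in \mathcal{D}} \subseteq F$ be the dominating nets witnessing $x_\alpha \xrightarrow{Fo} x$ and $z_\gamma \xrightarrow{Fo} z$. For (1), the Birkhoff inequality $\bigl||x_\alpha| - |x|\bigr| \le |x_\alpha - x|$ and the analogous $|x_\alpha^{\pm} - x^{\pm}| \le |x_\alpha - x|$ show that $\{y_\beta\}$ itself dominates $|x_\alpha| - |x|$ and $x_\alpha^{\pm} - x^{\pm}$, so the same witness works.

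For (2) and (3), the only subtlety is that $\{x_\alpha\}$ and $\{z_\gamma\}$ are indexed by different directed sets. I would index the combined net $\lambda x_\alpha + \mu z_\gamma$ (resp.\ $x_\alpha \vee z_\gamma$, $x_\alpha \wedge z_\gamma$) by the product directed set $\mathcal{A} \times \Gamma$ with coordinatewise order, and similarly index the dominating net by $\mathcal{B} \times \mathcal{D}$. Combining $|\lambda x_\alpha + \mu z_\gamma - \lambda x - \mu z| \le |\lambda| y_\beta + |\mu| w_\delta$ with the standard lattice estimates $|a \vee b - c \vee d| \le |a - c| + |b - d|$ and $|a \wedge b - c \wedge d| \le |a - c| + |b - d|$ yields the needed bounds, and the net $\{|\lambda| y_\beta + |\mu| w_\delta\}_{(\beta,\delta)}$ decreases to $0$ in $F$ coordinatewise.

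For (4), suppose $x_\alpha \le y$ for $\alpha \ge \alpha_0$. For any $\beta$ I would pick $\alpha$ beyond both $\alpha_0$ and the threshold for $y_\beta$, obtaining $x - y \le x - x_\alpha \le |x - x_\alpha| \le y_\beta$. Since $y_\beta \ge 0$ this forces $(x-y)^{+} \le y_\beta$ in $F$ for every $\beta$; taking the infimum over $\beta$ gives $(x-y)^{+} = 0$, so $x \le y$. Item (5) then follows by two applications of (4): to the constant net $-x_\alpha \le 0$ (yielding $x \ge 0$) and, after invoking (2) to obtain $z_\alpha - x_\alpha \xrightarrow{Fo} z - x$, to $-(z_\alpha - x_\alpha) \le 0$ (yielding $z \ge x$). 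For (6), any order projection $P$ satisfies $0 \le P \le I$, so $|Pu| \le |u|$ holds; hence $|Px_\alpha - Px| = |P(x_\alpha - x)| \le |x_\alpha - x| \le y_\beta$, and $\{y_\beta\}$ again serves as the dominating net.

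The most delicate point is (4): because $y$ belongs only to $F$, the element $x - y$ need not lie in $E$, so both $(x-y)^{+}$ and the inequalities controlling it must be interpreted in $F$. This is precisely why the definition of $Fo$-convergence requires $y_\beta \downarrow 0$ in $F$ rather than in $E$; without this, the final infimum argument would fail. The remaining items are then routine bookkeeping with the product-indexed dominating nets.
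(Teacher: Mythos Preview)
Your proposal is correct and follows exactly the approach the paper intends: the paper's entire proof is the single line ``These follow immediately by definition,'' and you have simply supplied the routine estimates (Birkhoff inequalities, product-indexed dominating nets, the $(x-y)^+\le y_\beta$ squeeze for (4), and $|Pu|\le|u|$ for projections) that make this immediate. There is nothing to compare---you have written out what the authors left implicit.
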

\begin{proof}
	These follow immediately by definition.
\end{proof}

\begin{theorem}\label{them1}
	Let $G$ be a sublattice of $E$ and $E$ be an ideal of $F$. Then the following statements hold:
	\begin{enumerate}
		\item  If $\{ x_\alpha \}_{\alpha \in \mathcal{A}} \subset G$ and $x_\alpha \xrightarrow{Eo}0 $  in $G$, then $x_\alpha \xrightarrow{Fo}0 $  in $G$.
		\item  If $\{ x_\alpha \}_{\alpha \in \mathcal{A}} \subset G$  is bounded in $E$ and $x_\alpha \xrightarrow{Fo}0 $  in $G$, then $x_\alpha \xrightarrow{Eo}0 $  in $G$.
	\end{enumerate}
\end{theorem}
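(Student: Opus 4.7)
The plan is to reduce both parts to a single auxiliary observation: \emph{an ideal of a vector lattice is automatically regular in it}. That is, if $E$ is an ideal of $F$ and $A\subseteq E$ has $\inf_E A = x$, then $\inf_F A = x$ as well. I would verify this first: given a lower bound $z\in F$ of $A$, fix any $a_0\in A$; then $0\le (z-x)^+ \le (a_0-x)$ in $F$, and since $a_0-x\in E$ and $E$ is an ideal, $(z-x)^+\in E$. Then $(z-x)^+ + x$ is a lower bound of $A$ in $E$, so $(z-x)^+\le 0$, forcing $z\le x$. In particular, if $\{y_\beta\}\subset E$ satisfies $y_\beta\downarrow 0$ in $E$, then $y_\beta\downarrow 0$ in $F$.

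For part (1), assume $x_\alpha \xrightarrow{Eo} 0$ in $G$ with witnessing net $\{y_\beta\}\subseteq E$ satisfying $y_\beta\downarrow 0$ in $E$ and eventually $|x_\alpha|\le y_\beta$. By the regularity fact above, $y_\beta\downarrow 0$ in $F$ as well. So the very same net $\{y_\beta\}\subseteq F$ is a witness for $x_\alpha \xrightarrow{Fo} 0$ in $G$. This part is essentially immediate once regularity is in hand.

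For part (2), suppose $|x_\alpha|\le u$ for some $u\in E^+$ and that $x_\alpha \xrightarrow{Fo} 0$ in $G$, with witness $\{y_\beta\}\subseteq F$, $y_\beta\downarrow 0$ in $F$, eventually $|x_\alpha|\le y_\beta$. The natural truncation is $y_\beta' := y_\beta\wedge u$. Then $0\le y_\beta'\le u\in E$ and $E$ is an ideal, so $y_\beta'\in E$; the net is decreasing because $\{y_\beta\}$ is. The key verification is $y_\beta'\downarrow 0$ in $E$: any lower bound $z\in E$ of $\{y_\beta'\}$ is also a lower bound of $\{y_\beta\}$ in $F$, so $z\le 0$, i.e.\ $\inf_F y_\beta' = 0$, which by regularity equals $\inf_E y_\beta' = 0$. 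Finally, combining $|x_\alpha|\le u$ with the eventual $|x_\alpha|\le y_\beta$ gives $|x_\alpha|\le y_\beta\wedge u = y_\beta'$ eventually, so $x_\alpha \xrightarrow{Eo} 0$ in $G$.

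The only step that requires genuine work, rather than routine unpacking of definitions, is the regularity of ideals; everything else is bookkeeping. The hypothesis that $E$ is an \emph{ideal} (as opposed to merely a sublattice) of $F$ is used precisely there, and the hypothesis of $E$-boundedness in part (2) is what makes the truncation $y_\beta\wedge u$ land inside $E$.
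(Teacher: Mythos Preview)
Your proof is correct and follows essentially the same approach as the paper: for part (1) you show the witnessing net $\{y_\beta\}\subset E$ still decreases to $0$ in $F$ via the ideal property, and for part (2) you truncate the $F$-witness by the bound $u$ to get $y_\beta\wedge u\in E$. The only difference is organizational: you isolate ``ideals are regular'' as a preliminary lemma and then appeal to it, whereas the paper verifies the needed special cases inline; you also spell out explicitly that $y_\beta\wedge u\downarrow 0$ in $E$, a detail the paper leaves implicit.
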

\begin{proof}
	(1) Suppose $\{ x_\alpha \}_{\alpha \in \mathcal{A}} \subset G$ and $x_\alpha \xrightarrow{Eo}0 $ in $G$, there exists $\{y_\beta\}_{\beta  \in \mathcal{B}} \subset E $  with $y_\beta\downarrow 0$ in $E$ such that
	$$ \forall \beta  , \exists \alpha_0 \quad s.t \quad \forall \alpha \geq \alpha_0 : |x_\alpha | \leq y_\beta .$$
	We show that $y_\beta \downarrow 0$ in $F$. Let $0 \leqslant u \leqslant y_\beta $ for all $\beta$.
	Since $\{ y_\beta \} \subset E$ and $E$ is an ideal in $F$, it follows that $u \in E$ and hence $u=0$. Thus $y_\beta \downarrow 0$ in $F$. This means that $x_\alpha \xrightarrow{Fo}0 $ in $G$.\\
	(2) By assumption,  there exists $\{y_\beta\} \subset F$ satisfying, $ y_\beta \downarrow 0$ and for each $ \beta$  there exists $\alpha_0$ such that $|x_\alpha | \leq y_\beta $ whenever $ \alpha \geq \alpha_0$. Let $u \in E^+$ such that $|x_\alpha| \leq u $. Since $\{u \wedge y_\beta \} \subset E $  and $u \wedge y_\beta  \leq y_\beta  $, thus for each  $\beta$ there exists $\alpha_0$ that $|x_\alpha| \leq u \wedge y_\beta $ whenever $ \alpha \geq \alpha_0$. It follows that $x_\alpha \xrightarrow{Eo}0 $ in $G$.  
\end{proof}


\begin{corollary}\label{cor1}
	Suppose that $E$ is a Dedekind complete and an ideal of $F$. If $\{x_\alpha\}_{\alpha \in \mathcal{A}}$ is order bounded in $E$, then
	$$x_\alpha \xrightarrow{Fo} x \quad \textit{in} \quad E\quad\text{iff} \quad x_\alpha \xrightarrow{o} x \quad \textit{in} \quad E\quad.$$
\end{corollary}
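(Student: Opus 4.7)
The plan is to view this corollary as a direct specialization of Theorem \ref{them1} to the case $G=E$, combined with the translation of the net by the limit $x$. The crucial observation I will rely on is that $Eo$-convergence of a net $\{x_\alpha\}\subseteq E$ to $0$ in $E$ (in the sense of the definition opening Section~2, applied with the ambient lattice $F$ replaced by $E$) is literally the same as ordinary order convergence $x_\alpha\xrightarrow{o}0$ in $E$, because both ask for a net $\{y_\beta\}\subseteq E$ with $y_\beta\downarrow 0$ in $E$ eventually dominating $|x_\alpha|$.

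For the implication $x_\alpha\xrightarrow{o}x \Rightarrow x_\alpha\xrightarrow{Fo}x$, I would pass to the shifted net $\{x_\alpha-x\}\subseteq E$, which is order null in $E$ by hypothesis, and invoke Theorem \ref{them1}(1) with $G=E$ to conclude that $x_\alpha-x\xrightarrow{Fo}0$ in $E$, equivalently $x_\alpha\xrightarrow{Fo}x$. This direction needs only that $E$ is an ideal of $F$; neither Dedekind completeness nor the order-boundedness hypothesis on $\{x_\alpha\}$ enters.

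For the converse, assume $x_\alpha\xrightarrow{Fo}x$ in $E$ with $\{x_\alpha\}$ order bounded in $E$. Since $x\in E$, the translated net $\{x_\alpha-x\}$ is still order bounded in $E$ and is $Fo$-null there. I then apply Theorem \ref{them1}(2) with $G=E$ to deduce $x_\alpha-x\xrightarrow{Eo}0$ in $E$, which by the opening observation is exactly $x_\alpha\xrightarrow{o}x$ in $E$.

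Honestly, I do not anticipate a serious obstacle: the proof is essentially a bookkeeping exercise matching the corollary's hypotheses to those of Theorem \ref{them1} with $G=E$ and reducing to the case of a null net by the standard subtraction trick (justified by part~(2) of the basic properties theorem on $Fo$-limits). The only item requiring a brief comment is why ordinary order convergence in $E$ coincides with $Eo$-convergence in $E$, and why Dedekind completeness of $E$ plays no essential role here beyond ensuring that the ambient framework in which one customarily discusses $o$-convergence behaves well.
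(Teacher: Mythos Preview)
Your proposal is correct and matches the paper's approach: the paper states Corollary~\ref{cor1} without proof, as an immediate consequence of Theorem~\ref{them1}, and your argument simply spells out this deduction by taking $G=E$ and translating by the limit $x$. Your observation that the Dedekind completeness hypothesis is not actually used in either direction is also accurate.
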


As Example \ref{E0}, the condition of boundedness for nets in above corollary is necessary. Now the following example and part (2) of Example \ref{E0} show that the ideal condition is also necessary. 

\begin{example}\label{E02}
Assume that $E$ is a set of real valued continuous functions on $[0,1]$   except at finitely many point and $F$ is Lebesgue integrable real valued  functions on $[0,1]$. Obviously, $E$ is a sublattice in $F$, but is not  ideal in $F$. Let $I_1=(\frac{1}{3},\frac{2}{3})$, $I_2=(\frac{1}{9},\frac{2}{9})\cup (\frac{3}{9},\frac{6}{9})\cup (\frac{7}{9},\frac{8}{9}), ...$, the segments that we remove them for constructing of the Contor set $P$. It is obvious that $\chi_{I_n}\uparrow \chi_{P^c}$ in $F$, but $\{\chi_{I_n}\}$ is not $F$-order convergent in $E$.  
\end{example}

\begin{definition}
	\begin{enumerate}
		\item $E$ is said to be $F$-Dedekind (or $F$-order complete), if every nonempty $A\subseteq E$ that is bounded from above in $F$ has supermum in $E$. In case $F=E^{\sim\sim}$,  we say that $E$ is $b$-Dedekind complete.
		\item If each $F$-order bounded subset of $E$ is order bounded in $E$, then $E$ is said to have the property ($F$). In case $F=E^{\sim\sim}$,  we say that $E$ has property ($b$).
	\end{enumerate}
\end{definition}

\begin{remark}
Let $F$ be a  Dedekind complete $AM$-space with order unit $e$. If $E$ is a Dedekind complete closed in $F$ contain $e$, then $E$ has property $(F)$, see [\cite{Schaefer},  p.110].	We obvious that every majorizing sublattice $E$ of $F$ has the property ($F$). Since $E^\sim$ has $b$-property, $E^\sim$ is $b$-Dedekind complete.
	If $E$ is  $F$-Dedekind  complete, then $E$ is Dedekind complete. The converse of last assertion  in general not holds, of course $c_0$ is Dedekind complete, but is not     $\ell^\infty$-Dedekind complete. 
It is easy to show that a vector lattice $E$ has property ($F$) if and only if for each net $\{x_\alpha \}$ in $E$ with $x_\alpha\uparrow y$ for some $y\in F$, follows that $\{x_\alpha \}$ is bounded above in $E$.
\end{remark}
\begin{theorem}\label{theo2} 
	Suppose that $E$ is  $F$-Dedekind complete and an ideal of $F$. For each   net $\{x_\alpha \}_{\alpha \in \mathcal{A}}$ in $E$, we have the following assertions.
	\begin{enumerate}
		\item If $x_\alpha \uparrow  x$   or $x_\alpha \downarrow  x$ in $F$, then $x_\alpha \xrightarrow{Fo} x$ in $E$ .
		\item If $x_\alpha \uparrow$   \text{(resp.}~ $x_\alpha \downarrow$)  and  $x_\alpha \xrightarrow{Fo} x$ in $E$, then  $x_\alpha \uparrow  x$ (resp.  $x_\alpha \downarrow  x$) in $F$.
	\end{enumerate}
\end{theorem}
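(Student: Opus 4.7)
The plan is to treat the two directions separately. Part (1) is the substantive one: the limit $x$ is initially known only as an element of $F$, but the conclusion $x_\alpha\xrightarrow{Fo}x$ in $E$ requires $x\in E$ by the very definition of $Fo$-convergence. So the first task is to promote $x$ into $E$ using both hypotheses; once accomplished, a witness net is immediate. Part (2), by contrast, follows directly from the definition and monotonicity, and needs neither hypothesis.

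For part (1), I would treat the increasing case (the decreasing case being symmetric via $-x_\alpha$). Fix $\alpha_0$ and shift to $u_\alpha := x_\alpha - x_{\alpha_0} \ge 0$ for $\alpha \ge \alpha_0$, so that $u_\alpha \uparrow x - x_{\alpha_0}$ in $F$. Since $\{u_\alpha\} \subseteq E^+$ is bounded above in $F$, $F$-Dedekind completeness yields $v := \sup_E u_\alpha \in E$, and clearly $x - x_{\alpha_0} \le v$ in $F$. The key trick is a truncation: set $w := v \wedge (x - x_{\alpha_0})$, computed in $F$. Then $0 \le w \le v \in E$, so solidity (the ideal property) forces $w \in E$; and $w \ge u_\alpha$ for every $\alpha \ge \alpha_0$, so the minimality of $v$ as supremum in $E$ gives $v \le w$, hence $v = x - x_{\alpha_0}$ and $x \in E$. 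Once $x \in E$, the net $y_\beta := x - x_\beta$ indexed by $\mathcal{A}$ decreases to $0$ in $F$ and satisfies $|x_\alpha - x| = x - x_\alpha \le x - x_\beta = y_\beta$ for all $\alpha \ge \beta$, which is exactly $x_\alpha \xrightarrow{Fo} x$ in $E$.

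For part (2), assume $x_\alpha \uparrow$ with $Fo$-limit $x$, witnessed by $y_\beta \downarrow 0$ in $F$ and $|x_\alpha - x| \le y_\beta$ eventually. For any fixed $\alpha$ and $\beta$, choose $\alpha'$ past both $\alpha$ and the threshold for $\beta$; monotonicity together with $|x_{\alpha'} - x| \le y_\beta$ gives $x_\alpha \le x_{\alpha'} \le x + y_\beta$, so $x_\alpha - x \le y_\beta$ for every $\beta$, and since $y_\beta \downarrow 0$ in $F$ we conclude $x_\alpha \le x$. If $u \in F$ is any other upper bound for $\{x_\alpha\}$, then for $\alpha$ past the threshold for $\beta$ we have $x - y_\beta \le x_\alpha \le u$, so $x - u \le y_\beta$ for every $\beta$, whence $x \le u$. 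Thus $x = \sup_F x_\alpha$, i.e.\ $x_\alpha \uparrow x$ in $F$; the decreasing case is symmetric.

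The main obstacle is the sup-transfer step in part (1): one must simultaneously invoke $F$-Dedekind completeness to extract $v$ inside $E$ and the ideal property to identify $v$ with $x - x_{\alpha_0}$ in $F$. The truncation argument $w = v \wedge (x - x_{\alpha_0})$ is the single place where both hypotheses are genuinely used; everything else is routine once the limit has been placed in the correct space.
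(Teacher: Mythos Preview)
Your argument is correct. The routes differ mainly in part~(1).

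For part~(1), the paper simply writes ``It is obvious by using Corollary~\ref{cor1}.'' Unpacking this, the intended argument is presumably: $F$-Dedekind completeness gives $x':=\sup_E x_\alpha\in E$; since an ideal is a regular sublattice, this supremum agrees with $\sup_F x_\alpha=x$, so $x\in E$ and $x_\alpha\uparrow x$ in $E$; then $x_\alpha\xrightarrow{o}x$ in $E$, and Corollary~\ref{cor1} (or regularity) upgrades this to $Fo$-convergence. Your direct truncation $w=v\wedge(x-x_{\alpha_0})$ accomplishes the same identification $v=x-x_{\alpha_0}$ without appealing to regularity or to the earlier corollary, and your explicit witness $y_\beta=x-x_\beta$ avoids the detour through $o$-convergence altogether. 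The two arguments are equivalent in content; yours is more self-contained and makes transparent exactly where each hypothesis enters, whereas the paper's one-line citation hides the step of placing $x$ inside $E$.

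For part~(2), your argument and the paper's are essentially the same sandwich via the witness net. Your version is slightly cleaner: you correctly take the competing upper bound $u$ in $F$ (as required for $x_\alpha\uparrow x$ \emph{in $F$}), whereas the paper takes $u\in E$. Your remark that part~(2) requires neither $F$-Dedekind completeness nor the ideal hypothesis is correct and worth noting; the paper does not isolate this.
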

\begin{proof}
	(1) It is obvious by using Corollary \ref{cor1}.\\
	(2) Suppose $x_\alpha \uparrow  $ and  $x_\alpha \xrightarrow{Fo} x$ in $E$. There exists $\{y_\beta\} \subset F$ satisfying, $ y_\beta \downarrow 0$ and for fixed $ \beta_0$  there exists $\alpha_0$ such that $|x_\alpha | \leq y_{\beta_0} $ whenever $ \alpha \geq \alpha_0$. So 
\begin{align}
x-y_{\beta_0} < x_\alpha < x+ y_{\beta_0}, 
\end{align}	 
whenever $\alpha \geq \alpha_0$. 	Therefore,  $x_{\alpha} \leq y_{\beta_0} + x$ for all  $\alpha \geq \alpha_0$. Thus $\sup_{\alpha \geq \alpha_0} x_\alpha\leq y_{\beta_0}+x$, and so  $\sup_{\alpha \geq \alpha_0} x_\alpha\leq x$. It  follows that $\sup_{\alpha } x_\alpha\leq x$. Now, assume that $x_\alpha\leqslant u$ for some $u\in E$. By  (1), we have  $	x-y_{\beta_0} < x_\alpha\leqslant u$ for all $\alpha \geq \alpha_0$. It follows that $x\leqslant u$ and the proof follows. The second part has the similar argument.
\end{proof}

\begin{theorem}
	For each sequence $\{x_n\}_{n \in \mathbb{N}} $ in $E$ the following statements hold:
	\begin{enumerate}
		\item If $F$ has an order continuous norm and $x_n \xrightarrow{Fo} 0 $ in $E$, then there exists a subsequence $\{x_{n_k}\}$ such that $x_{n_k} \xrightarrow{o} 0 $ holds in $E$.
		\item  If $F$ is a Banach lattice and $\{x_n \}$ is norm convergent to $x\in E$, then there exists a subsequence $\{x_{n_k}\}$  such that  $x_{n_k} \xrightarrow{Fo} x$ holds in $E$.
		\item If $E$ is $F$-Dedekind complete and $x_n \xrightarrow{Fo}0 $   in $E$,  then $\{x_n\}_{n \in \mathbb{N}}$ is order bounded in $E$.
\end{enumerate}	
\end{theorem}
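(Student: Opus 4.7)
The plan is to handle the three parts largely independently: (1) and (2) both rest on a subsequence--summability argument inside the Banach lattice $F$, while (3) is a direct application of $F$-Dedekind completeness.

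For (2), the norm convergence $\|x_n-x\|_F\to 0$ lets me pass to a subsequence $\{x_{n_k}\}$ with $\|x_{n_k}-x\|_F\le 2^{-k}$. The partial sums $s_M:=\sum_{k=1}^{M}|x_{n_k}-x|\in E$ are monotone increasing and norm-Cauchy in $F$, so they converge in norm to some $s\in F$; because the positive cone of the Banach lattice $F$ is norm-closed, this $s$ is in fact the supremum $\sup_M s_M$. Setting $z_m:=s-s_{m-1}=\sum_{k\ge m}|x_{n_k}-x|$ gives a sequence in $F$ with $z_m\downarrow 0$ and $|x_{n_k}-x|\le z_m$ whenever $k\ge m$, which is precisely the witness for $x_{n_k}\xrightarrow{Fo}x$ in $E$.

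Part (1) specializes this idea. Order continuity of the norm on $F$ upgrades $y_\beta\downarrow 0$ to $\|y_\beta\|_F\to 0$; combined with the eventual domination $|x_n|\le y_\beta$, this forces $\|x_n\|_F\to 0$. Running the construction of (2) with $x=0$ delivers a subsequence and a net $\{z_m\}\subseteq F$ with $z_m\downarrow 0$ and $|x_{n_k}|\le z_m$ for $k\ge m$. To promote this from $Fo$- to genuine $o$-convergence in $E$, I exploit that each $z_m$ is the norm-limit of an increasing sequence of the form $s_M-s_{m-1}$, whose terms lie in $E$, and I use the tie between norm and order convergence of monotone nets in an order-continuous Banach lattice to realise $z_m$ inside $E$; any lower bound in $E$ of the $z_m$ is automatically a lower bound in $F$, so the infimum $0$ is inherited by $E$.

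For (3), fix any index $\beta_0$ and the associated $n_0$, so that $|x_n|\le y_{\beta_0}$ for every $n\ge n_0$; the finitely many earlier terms are dominated by $w:=\bigvee_{n<n_0}|x_n|\in E$, giving $|x_n|\le w\vee y_{\beta_0}\in F$ for every $n$. Hence $\{|x_n|\}_{n\in\mathbb{N}}\subseteq E$ is bounded above in $F$, and the $F$-Dedekind completeness of $E$ yields $\sup_n|x_n|\in E$, the required order bound. The main technical obstacle across the whole theorem is the last upgrade in (1): the naturally-constructed dominating net is built from an infinite sum whose limit a priori lives only in $F$, and one must leverage the order continuity of the $F$-norm together with the fact that all partial sums already lie in $E$ to ensure that the tail sums $z_m$ themselves belong to $E$.
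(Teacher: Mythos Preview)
For parts (2) and (3) your arguments are correct and match the paper's almost verbatim; the only cosmetic difference is that in (2) the paper selects the subsequence so that $\sum_k k\,|x_{n_k}-x|$ converges to some $u\in F$ and then uses the dominating sequence $\frac{1}{k}u\downarrow 0$ rather than your tail sums $z_m$.

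Part (1), however, has a genuine gap. You correctly single out the obstacle: the tails $z_m=\sum_{k\ge m}|x_{n_k}|$ are built as norm limits in $F$ and a priori lie only in $F$, whereas $o$-convergence in $E$ demands a dominating net \emph{in $E$}. Your proposed fix---that order continuity of the $F$-norm, together with the fact that the partial sums $s_M-s_{m-1}$ lie in $E$, forces $z_m\in E$---does not work: order continuity tells you that an increasing norm-convergent sequence converges to its supremum \emph{in $F$}, but gives no reason for that supremum to land in the sublattice $E$. In fact the assertion fails without extra hypotheses. Take $E=c_{00}$, $F=\ell^2$ (which has order continuous norm) and $x_n=\tfrac{1}{n}e_n$. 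Then $x_n\xrightarrow{Fo}0$ in $E$, witnessed by $y_m=\sum_{k\ge m}\tfrac{1}{k}e_k\in\ell^2$ with $y_m\downarrow 0$; yet no subsequence can $o$-converge to $0$ in $c_{00}$, because every $w\in c_{00}^+$ has finite support and therefore fails to dominate $|x_{n_k}|=\tfrac{1}{n_k}e_{n_k}$ for all large $k$. The paper's own proof glosses over exactly this point (it simply writes ``Clearly, $z_k\in E$''), so you have reproduced the paper's argument faithfully, including its gap; an additional assumption such as $E$ being norm-closed in $F$ would repair both.
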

\begin{proof}
\begin{enumerate}
\item  There exists $\{y_m\}_{m \in \mathbb{N}}$ in $F$ satisfying, $y_m \downarrow 0 $ and for every $m$ there exists $n_0$ such that $|x_n| \leq y_m $ whenever $n \geq n_0$. By assumption $||y_m|| \longrightarrow 0 $, it follows that  $||x_n|| \longrightarrow 0 $. Pick subsequence $\{x_{n_k}\}$ of $\{x_n\}$ such that $|| x_{n_k} || < \frac{1}{2^k} $ for all $k$. Set $z_k= \sum_{i=k}^{\infty}|x_{n_k}| $. Clearly, $z_k \in E$ and for some $k_0$ we have $|x_{n_k}| \leq z_k \downarrow 0 $ whenever $n_k \geq k_0$ . This implies  that $x_{n_k} \xrightarrow{o} 0 $ holds in $E$.
\item By our hypothesis, there exists a subsequence $\{x_{n_k}\}$  such that $|| x_{n_k} -  x || \leq \frac{1}{k 2^{k}} $ for all $k$. Since $\sum_{k=1}^\infty k | x_{n_k} -  x |$ is norm convergence to some $u \in F$, then $ k | x_{n_k} -  x | \leq u$ for all $k$. Clearly, $\{\frac{1}{k}u\}$ is a sequence in $F$ such that $\frac{1}{k }u \downarrow 0 $ and $| x_{n_k} -  x | \leq \frac{1}{k}u$ and the proof is complete.
\item There exists a sequence $\{y_m\}_{m \in \mathbb{N}} $ in $F$ satisfying, $y_m \downarrow 0 $ and for every $m$ there exists $n_0$ such that $|x_n| \leq y_m $ whenever $n \geq n_0$. Fix $m\in \mathbb{N}$ such that  $|x_n| \leq y_m $ for all $n \geq n_0$. Put $z=\sup \{|x_1|,|x_2|,\ldots ,|x_{n_0-1}|,y_m\} $. Thus $|x_n| \leq z $ for all $n \in \mathbb{N}$, and so $z$ is an upper bound of $\{x_n\}$ in $F$. Since $E$ is $F$-Dedekind complete, it follows that $\{x_n\} $ is bounded in $E$.
\end{enumerate}
\end{proof}

\begin{remark}
	It is easy to see that for an order bounded net $\{x_\alpha\}$ in a Dedekind complete (order complete) vector lattice,
	$$  x_\alpha \xrightarrow{o} 0 \quad \text{in ~E~}~ \text{iff} \quad x=\inf_\alpha \sup_{\beta \geq \alpha} x_\beta= \sup_\alpha \inf_{\beta \geq \alpha} x_\beta \quad \text{in~E~} $$$$\text{iff} \quad x=\inf_\alpha \sup_{\beta \geq \alpha} |x_\beta - x |\text{~in~E}.$$
\end{remark}

The following fact is straightforward.

\begin{lemma}\label{Lemm1}
	Let $E$ be a sublattice of a Dedekind complete vector lattice $F$. Then
	$$  x_\alpha \xrightarrow{Fo} x \quad in \quad E \quad \text{iff} \quad x=\inf_\alpha \sup_{\beta \geq \alpha} x_\beta= \sup_\alpha \inf_{\beta \geq \alpha} x_\beta \  \text{in} \  F, $$
	for every order bounded net $\{x_\alpha\}$ in $E$.
\end{lemma}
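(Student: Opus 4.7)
Since $F$ is Dedekind complete and $\{x_\alpha\}$ is order bounded (hence order bounded in $F$), both $u_\alpha := \sup_{\beta \geq \alpha} x_\beta$ and $v_\alpha := \inf_{\beta \geq \alpha} x_\beta$ exist in $F$. The plan is to pivot through these tail suprema and infima, using the monotonicity $u_\alpha \downarrow$, $v_\alpha \uparrow$, and the inequality $v_\alpha \leq u_{\alpha'}$ for every pair $\alpha,\alpha'$ (which follows by going to a common upper index $\gamma \geq \alpha,\alpha'$ and applying $v_\alpha \leq x_\gamma \leq u_{\alpha'}$).

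For the forward implication, I would take a witness $y_\beta \downarrow 0$ in $F$ with $|x_\alpha - x| \leq y_\beta$ eventually; hence $x - y_\beta \leq x_\alpha \leq x + y_\beta$ for $\alpha \geq \alpha_0(\beta)$. Passing to the tail sup and tail inf gives $u_{\alpha_0} \leq x + y_\beta$ and $v_{\alpha_0} \geq x - y_\beta$, so $\inf_\alpha u_\alpha - x \leq y_\beta$ and $x - \sup_\alpha v_\alpha \leq y_\beta$ for every $\beta$. Taking the infimum over $\beta$ in $F$ yields $\inf_\alpha u_\alpha \leq x \leq \sup_\alpha v_\alpha$, and combined with the universal inequality $\sup_\alpha v_\alpha \leq \inf_\alpha u_\alpha$ noted above, all three quantities coincide with $x$.

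For the converse, suppose $x = \inf_\alpha u_\alpha = \sup_\alpha v_\alpha$. I would set $y_\alpha := u_\alpha - v_\alpha \geq 0$; this net is decreasing (being a sum of two decreasing nets $u_\alpha$ and $-v_\alpha$), and since $v_\alpha \leq x \leq u_\alpha$ and $v_\alpha \leq x_\alpha \leq u_\alpha$, one has $|x_\alpha - x| \leq u_\alpha - v_\alpha = y_\alpha$, so $|x_\alpha - x| \leq y_\beta$ whenever $\alpha \geq \beta$. It then remains to verify that $y_\alpha \downarrow 0$ in $F$.

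The one step that actually needs argument is this last claim, since in a general vector lattice the sum of two nets decreasing to zero need not obviously decrease to zero. Writing $a_\alpha := u_\alpha - x \downarrow 0$ and $b_\alpha := x - v_\alpha \downarrow 0$, if $w$ is any lower bound of $a_\alpha + b_\alpha$, then for $\alpha \geq \alpha_0$ we have $b_\alpha \geq w - a_\alpha \geq w - a_{\alpha_0}$; taking the infimum of $b_\alpha$ gives $0 \geq w - a_{\alpha_0}$, and then varying $\alpha_0$ yields $w \leq \inf_\alpha a_\alpha = 0$. Hence $y_\alpha \downarrow 0$ in $F$, completing the proof. This sum-of-decreasing-nets step is the only subtle point; everything else is a direct unpacking of the definitions.
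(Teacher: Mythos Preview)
Your argument is correct. The paper itself omits the proof entirely, stating only that the fact ``is straightforward'' (it is essentially the remark preceding the lemma, transplanted from $E$ to the Dedekind complete ambient lattice $F$). Your write-up supplies exactly the details one would expect: the forward direction squeezes the tail sup/inf between $x \pm y_\beta$, and the converse builds the dominating net $y_\alpha = u_\alpha - v_\alpha$ and checks $y_\alpha \downarrow 0$ via the standard two-decreasing-nets argument. There is nothing to correct.
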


A net $\{x_\alpha\}$ in $E$ is a $F$-order Cauchy, if the double net $\{(x_\alpha-x_\beta )\}_{(\alpha,\beta)}$ $F$-convergence in order to zero in $E$. The following proposition follows from the double equality of Lemma \ref{Lemm1} and the proof is straightforward. 
\begin{proposition}
Every $F$-order Cauchy net in an Dedekind complete vector lattice $E$ is order convergent.
\end{proposition}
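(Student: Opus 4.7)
The plan is to reduce the statement to the inf--sup characterization in Lemma \ref{Lemm1}. Let $\{x_\alpha\}_{\alpha \in \mathcal{A}}$ be an $F$-order Cauchy net in $E$; by definition the double net $(x_\alpha - x_\beta)_{(\alpha,\beta)}$ is $F$-order null, so there exists $\{y_\gamma\} \subseteq F$ with $y_\gamma \downarrow 0$ and, for each $\gamma$, an index $\alpha_\gamma$ such that $|x_\alpha - x_\beta| \leq y_\gamma$ whenever $\alpha, \beta \geq \alpha_\gamma$.

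The first step I would carry out is to verify that $\{x_\alpha\}$ is order bounded. Fix any $\gamma_0$; for every $\alpha \geq \alpha_{\gamma_0}$ the estimate $|x_\alpha| \leq |x_{\alpha_{\gamma_0}}| + y_{\gamma_0}$ bounds the tail, and adjoining the finitely many initial terms gives an order bound for the whole net. Dedekind completeness then allows me to form the monotone nets $u_\alpha := \sup_{\beta \geq \alpha} x_\beta$ and $v_\alpha := \inf_{\beta \geq \alpha} x_\beta$.

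The decisive step is the Cauchy estimate. For $\alpha \geq \alpha_\gamma$ and any $\beta, \beta' \geq \alpha$ we have $x_\beta - x_{\beta'} \leq y_\gamma$; taking the supremum over $\beta$ and then the infimum over $\beta'$ gives $u_\alpha - v_\alpha \leq y_\gamma$. Since $\{u_\alpha\}$ decreases, $\{v_\alpha\}$ increases, and their difference is dominated by $y_\gamma \downarrow 0$, the limits coincide at a common value $x = \inf_\alpha u_\alpha = \sup_\alpha v_\alpha$. Invoking Lemma \ref{Lemm1}, applied to the (now known to be) order bounded net $\{x_\alpha\}$, this double equality is exactly $x_\alpha \xrightarrow{Fo} x$ in $E$, which is the desired order convergence.

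The main obstacle I anticipate is the interplay between $E$ and $F$: the Cauchy witnesses $y_\gamma$ live in $F$, so the dominations used to bound the tail and to control $u_\alpha - v_\alpha$ naturally occur in $F$, while the suprema and infima needed for the conclusion must end up in $E$ if $x$ is to be a legitimate order limit in $E$. Dedekind completeness of $E$ is precisely what bridges this gap, provided the $F$-level bound on the tail can be promoted to an $E$-level order bound; managing this transfer cleanly (and thereby ensuring that $u_\alpha, v_\alpha$, and hence $x$, all lie in $E$) is the step that will require the most care.
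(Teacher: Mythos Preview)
Your overall strategy matches the paper's one-line hint (``follows from the double equality of Lemma~\ref{Lemm1}''), and your write-up is far more explicit than what the paper offers. However, the obstacle you flag in your final paragraph is not merely a matter of care---it is fatal under the stated hypotheses. The tail bound $|x_\alpha|\le |x_{\alpha_{\gamma_0}}|+y_{\gamma_0}$ is an element of $F$, and Dedekind completeness of $E$ alone does \emph{not} promote an $F$-order bound to an $E$-order bound (that would be property~$(F)$, or $F$-Dedekind completeness, neither of which is assumed). Without an $E$-bound you cannot form $u_\alpha,v_\alpha$ in $E$, and hence cannot place the limit $x$ in $E$. Note too that Lemma~\ref{Lemm1} itself assumes $F$ (not $E$) is Dedekind complete, so your final appeal to it is a second hypothesis mismatch; and even granting it, the lemma yields $Fo$-convergence, which your last sentence conflates with order convergence. (A minor separate slip: ``adjoining the finitely many initial terms'' is wrong for general nets---there may be infinitely many indices not above $\alpha_{\gamma_0}$---though only tails matter for convergence, so this is harmless.)

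In fact the gap cannot be repaired, because the proposition is false as written. Take $E=c_0$, $F=\ell^\infty$, and $x_n=e_n$ the standard unit vectors. Then $E$ is Dedekind complete; defining $y_k\in\ell^\infty$ by $y_k(j)=0$ for $j\le k$ and $y_k(j)=1$ for $j>k$, one has $y_k\downarrow 0$ in $\ell^\infty$ and $|e_n-e_m|\le y_k$ whenever $n,m>k$, so $\{e_n\}$ is $F$-order Cauchy. But $\{e_n\}$ is not order convergent in $c_0$ (cf.\ Example~\ref{E0}(1)). Some extra hypothesis---order boundedness of the net in $E$, or $F$-Dedekind completeness of $E$, or Dedekind completeness of $F$ together with a reason the limit lands in $E$---is genuinely required.
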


For a vector lattice $E$, we write $E^\delta$ for its order ( or Dedekind) completion. Recall from \cite{a} Theorem 1.41 that $E^\delta$ is the unique ( up to a lattice isomorphism) order complete vector lattice that contains $E$ as
a majorizing and order dense sublattice. In particular, $E$ is regular sublattice of $E^\delta$.
\begin{theorem}\cite{folland}\label{theo101}
Let $E$ be a regular sublattice of a vector lattice $F$. Then
\begin{enumerate}
\item $E^\delta$ is a regular sublattice of $F^\delta$.
\item $x_\alpha \xrightarrow{o} 0$ in $E$ iff  $x_\alpha \xrightarrow{o} 0$ in $F$ for every order bounded net $\{x_\alpha\}$ in $E$.
\end{enumerate}	
\end{theorem}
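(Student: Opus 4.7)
The plan is to prove (1) via the universal property of the Dedekind completion, and then use (1) together with the order-density of $E$ in $E^\delta$ to establish (2).

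For part (1), the inclusion $E\hookrightarrow F$ is an order-continuous lattice homomorphism (because $E$ is regular in $F$), and so is $F\hookrightarrow F^\delta$ (because $F$ is regular in its Dedekind completion). Composing gives an order-continuous lattice homomorphism $E\hookrightarrow F^\delta$ into a Dedekind complete space, which by the universal property of the Dedekind completion (as in \cite{a}, Theorem 1.41) extends uniquely to an injective order-continuous lattice homomorphism $\iota:E^\delta\to F^\delta$. Identifying $E^\delta$ with $\iota(E^\delta)$, regularity of $E^\delta$ inside $F^\delta$ amounts precisely to order continuity of $\iota$.

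For the forward direction of (2), any witness $y_\beta\downarrow 0$ in $E$ for $x_\alpha\xrightarrow{o}0$ in $E$ is simultaneously a witness in $F$: the net $\{y_\beta\}$ is unchanged and its infimum is the same in $E$ and in $F$ by regularity. For the harder reverse direction, suppose $\{x_\alpha\}$ is order bounded in $E$ by some $u\in E^+$ and $x_\alpha\xrightarrow{o}0$ in $F$. Passing through $F^\delta$ via regularity of $F$ in $F^\delta$, the standard inf-sup characterization in the Dedekind complete space $F^\delta$ gives $\inf_\gamma \sup_{\beta\geq\gamma}|x_\beta|=0$ in $F^\delta$. By part (1), each $v_\gamma:=\sup_{\beta\geq\gamma}|x_\beta|$ computed in $E^\delta$ (existing because $\{x_\alpha\}$ is order bounded in $E\subseteq E^\delta$) agrees with the same supremum in $F^\delta$, and likewise for the subsequent infimum; hence $v_\gamma\downarrow 0$ in $E^\delta$.

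The remaining task, and the main obstacle, is to push this witness down from $E^\delta$ into $E$, since it need not live in $E$. Consider $Z:=\{w\in E^+:|x_\alpha|\leq w\text{ eventually}\}$; this set is downward directed via $\wedge$ and contains $u$. Using the dual form of order-density of $E$ in $E^\delta$, each $v_\gamma$ equals $\inf\{w\in E:w\geq v_\gamma\}$ in $E^\delta$, and this set is a subset of $Z$, so $\inf Z\leq \inf_\gamma v_\gamma=0$ in $E^\delta$. Since $Z\subseteq E^+$ and $0\in E$ is a lower bound of $Z$, it is the greatest lower bound of $Z$ in $E$ as well. The net $\{w\}_{w\in Z}$ indexed by itself with reverse order therefore decreases to $0$ in $E$ and bounds $|x_\alpha|$ eventually, yielding $x_\alpha\xrightarrow{o}0$ in $E$.
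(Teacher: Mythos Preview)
The paper does not supply its own proof of this theorem: it is quoted verbatim from \cite{folland} (Gao--Troitsky--Xanthos), so there is no in-paper argument to compare against. Your proposal is therefore an independent proof of a cited result rather than a reconstruction of something the authors wrote.

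That said, your argument is essentially correct and follows the standard route used in the literature. A couple of points deserve tightening. In part~(1) you invoke ``the universal property of the Dedekind completion (as in \cite{a}, Theorem~1.41)'', but that theorem only asserts existence and uniqueness of $E^\delta$; the extension statement you actually need---that an order-continuous lattice homomorphism from $E$ into a Dedekind complete lattice extends to an order-continuous lattice homomorphism on $E^\delta$---is a separate (though standard) result, and injectivity of the extension should be justified via order density of $E$ in $E^\delta$. In part~(2), the reverse implication is handled well: passing to $F^\delta$, using the $\inf$--$\sup$ description there, pulling the tails $v_\gamma=\sup_{\beta\ge\gamma}|x_\beta|$ back into $E^\delta$ via part~(1), and then pushing the witness down from $E^\delta$ to $E$ by means of the downward-directed set $Z=\{w\in E^+:|x_\alpha|\le w\text{ eventually}\}$. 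The key step $\inf_E Z=0$ is justified, since any lower bound of $Z$ in $E$ is a lower bound in $E^\delta$ and hence $\le 0$. One small oversight: when you claim $\{w\in E:w\ge v_\gamma\}\subseteq Z$, you should note $w\ge v_\gamma\ge 0$ to ensure $w\in E^+$; and you use without comment that $E$ is regular (equivalently, order dense and majorizing in the present sense) in $E^\delta$ when concluding $\inf_E Z=0$ from $\inf_{E^\delta}Z=0$.
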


\begin{theorem} 
	Suppose that $E$ is an order dense and majorizing sublattice of $F$. Then the both order convergence and $F$-order convergence are equivalent. 
\end{theorem}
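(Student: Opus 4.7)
The plan is to prove the two implications separately. The forward implication is nearly immediate from order density, while the reverse requires replacing a witnessing net in $F$ by one in $E$.

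For $o \Rightarrow Fo$, I would first verify that order density alone makes $E$ regular in $F$. Suppose $A \subseteq E^+$ with $\inf A = 0$ in $E$, and let $v \in F$ be any lower bound of $A$ in $F$. Since $a \geq 0$, the inequality $v \leq a$ gives $v^+ \leq a$ for every $a \in A$; if $v^+ > 0$, order density provides $e_0 \in E$ with $0 < e_0 \leq v^+$, yielding a strictly positive lower bound of $A$ in $E$, a contradiction. Thus $\inf A = 0$ in $F$ as well, so any witness $y_\beta \downarrow 0$ for $o$-convergence in $E$ also witnesses $Fo$-convergence.

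For $Fo \Rightarrow o$, the crux is the following key lemma, which I would prove first: \emph{for each $y \in F^+$, every $v \in F$ satisfying $v \leq e$ for all $e \in E$ with $e \geq y$ actually satisfies $v \leq y$}. The set $\{e \in E : e \geq y\}$ is nonempty by the majorizing hypothesis. If $(v - y)^+ > 0$, order density supplies $e_0 \in E$ with $0 < e_0 \leq (v - y)^+$. For any such $e$, the inequalities $v \leq e$ and $e \geq y$ give $(v - y)^+ \leq e - y$, so $e_0 \leq e - y$, whence $e - e_0 \in E$ with $e - e_0 \geq y$; thus $v \leq e - e_0$, i.e., $v + e_0$ is again a lower bound of the same set. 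Iterating, $n e_0 \leq e - v$ for all $n \in \mathbb{N}$ and any fixed such $e$; the Archimedean property of $F$ then forces $e_0 \leq 0$, a contradiction.

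Granting the lemma, suppose $x_\alpha \xrightarrow{Fo} x$ in $E$ with witness $\{y_\beta\} \subseteq F^+$, $y_\beta \downarrow 0$ in $F$. I build a new witness inside $E$ by indexing over
\[
D = \{(e,\beta) : e \in E,\ e \geq y_\beta\}, \qquad (e,\beta) \preceq (e',\beta') \iff \beta \leq \beta' \text{ and } e \geq e',
\]
which is directed because any $\beta_3 \geq \beta_1,\beta_2$ satisfies $y_{\beta_3} \leq y_{\beta_i} \leq e_i$, hence $y_{\beta_3} \leq e_1 \wedge e_2 \in E$, so $(e_1 \wedge e_2, \beta_3) \in D$ dominates both $(e_1, \beta_1)$ and $(e_2, \beta_2)$. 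Setting $z_{(e,\beta)} := e$, the net $\{z_{(e,\beta)}\} \subseteq E^+$ is decreasing. Any $v \in E$ below it is, for each fixed $\beta$, a lower bound in $F$ of $\{e \in E : e \geq y_\beta\}$, so the key lemma gives $v \leq y_\beta$ for all $\beta$, and $y_\beta \downarrow 0$ in $F$ forces $v \leq 0$. Thus $z_{(e,\beta)} \downarrow 0$ in $E$, and for each $(e,\beta) \in D$ the $Fo$-hypothesis yields $\alpha_0$ with $|x_\alpha - x| \leq y_\beta \leq e = z_{(e,\beta)}$ whenever $\alpha \geq \alpha_0$. Therefore $x_\alpha \xrightarrow{o} x$ in $E$.

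The main obstacle is the key lemma, which is the only place both hypotheses interact essentially: majorizing supplies nonemptiness, order density supplies the approximant $e_0$, and the Archimedean property (tacit throughout the paper) drives the iterative contradiction that lets $F$-level decrease be traded for $E$-level decrease.
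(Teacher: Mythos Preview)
Your proof is correct and rests on the same central fact as the paper's: for $y\in F^+$ one has $y=\inf_F\{e\in E:e\ge y\}$, which the paper simply cites (as Lemma~2.7 of Gao--Troitsky--Xanthos) while you prove it from scratch via the Archimedean iteration. The main executional difference is in how the $E$-valued dominating net is built. The paper fixes a single index $\beta_0$, sets $A=\{u\in E:u\ge y_{\beta_0}\}$, and observes that this one downward-directed set already has infimum $0$ in $F$ (hence in $E$ by regularity) and dominates the tail of $\{|x_\alpha-x|\}$; your construction over the product index $D=\{(e,\beta):e\ge y_\beta\}$ works too but is more elaborate than necessary. Conversely, your write-up is more self-contained: you spell out both the regularity of $E$ in $F$ (which the paper takes for granted in the forward direction) and the key lemma, so no external citation is needed.
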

\begin{proof}
	The forward implication follows by definition.  Now suppose that $x_\alpha \xrightarrow{Fo} 0$ in $E$. Then by definition there exists a net $ \{y_\beta\} $  in $F $ satisfying $ y_\beta\downarrow 0 $ and for each $\beta$ there exists $\alpha_0$ such that $ \vert x_\alpha - x \vert \leq y_\beta $ whenever $\alpha \geq \alpha_0$.
	Fix $\beta_0$ and set 
	$$ A=\{ u \in E \  : \  u \geq y_{\beta_0} \}. $$
	If $z \in F$ and $ 0 \leq z \leq x$ for all $x\in A$, then for every $\beta \geqslant \beta_0$,  we have 
	$ z \leq x$ whenever $x\in \{ u \in E \ : \ u \geq y_\beta \}$, 
	so that  by using Lemma 2.7 of \cite{folland}, we have  $z \leq y_\beta$  for all $\beta \geqslant \beta_0$. Hence $z=0$. On the other hand $\inf{A} = 0$ holds in $F$, and by regularity of $E$ in $F$, we have $\inf{A} = 0$ holds in $E$. Since $A$ is directed downwards, we may view $A$ as a decreasing net in $E$ and it is easy to see that this net dominates $\{x_\alpha-x\}$ in the sense of order convergence. Thus $\{x_\alpha\}$ is order convergent to $x$ in $E$.
\end{proof}
\begin{corollary}\label{cor10}
	For every net $\{x_\alpha\}$ in $E$,  $x_\alpha \xrightarrow{o} 0$ in $E$  iff  $x_\alpha \xrightarrow{E^\delta{o}} 0$ in $E$.
\end{corollary}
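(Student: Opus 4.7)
The plan is essentially to recognize that this corollary is just an instance of the preceding theorem. By the definition recalled immediately before Theorem~\ref{theo101}, the Dedekind completion $E^\delta$ is the (essentially unique) order complete vector lattice that contains $E$ as a majorizing and order dense sublattice. Consequently, the hypothesis of the preceding theorem (``$E$ is an order dense and majorizing sublattice of $F$'') is satisfied with the choice $F = E^\delta$.

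With this observation in hand, the proof is one line: apply the preceding theorem to the pair $(E, E^\delta)$ to conclude that order convergence in $E$ and $E^\delta$-order convergence in $E$ coincide, which is exactly the statement of the corollary. There is no genuine obstacle here; the only thing to verify is that we are entitled to instantiate $F$ as $E^\delta$, which is immediate from the standard properties of the Dedekind completion cited in the paragraph preceding Theorem~\ref{theo101}.

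If one wanted to be slightly more explicit, one could spell out both directions separately. The forward direction (order convergence in $E$ implies $E^\delta$-order convergence in $E$) is trivial from the definition, since any dominating net $y_\beta \downarrow 0$ in $E$ automatically lies in $E^\delta$ and, because $E$ is a regular sublattice of $E^\delta$, still satisfies $y_\beta \downarrow 0$ in $E^\delta$. The reverse direction is exactly the content of the theorem applied with $F = E^\delta$.
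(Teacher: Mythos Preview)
Your proposal is correct and matches the paper's intended approach: the corollary is stated without proof precisely because it is the immediate specialization $F = E^\delta$ of the preceding theorem, using the standard fact that $E$ sits in $E^\delta$ as a majorizing order dense sublattice. Your additional remarks about the forward direction are fine but unnecessary, since both directions are already covered by that theorem.
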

\begin{corollary}
	If $E$ is regular sublattice of $F$, then $x_\alpha \xrightarrow{E^\delta{o}} 0$ in $E$  iff $x_\alpha \xrightarrow{F^\delta{o}} 0$ in $E$  for every order bounded net $\{x_\alpha\}$  in $E^\delta$.
\end{corollary}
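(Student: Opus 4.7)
The plan is to reduce both $E^\delta o$- and $F^\delta o$-convergence to ordinary order convergence in a Dedekind complete ambient space and then invoke Theorem \ref{theo101}(2). First, since $E$ is a regular sublattice of $F$, part (1) of Theorem \ref{theo101} yields that $E^\delta$ is a regular sublattice of $F^\delta$, so the pair $(E^\delta, F^\delta)$ satisfies the hypotheses of part (2) of the same theorem.

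Next I would unfold the definition of $Fo$-convergence. For a net $\{x_\alpha\}$ with values in $E \subseteq E^\delta$, the assertion $x_\alpha \xrightarrow{E^\delta o} 0$ means there is $\{y_\beta\} \subseteq E^\delta$ with $y_\beta \downarrow 0$ in $E^\delta$ eventually dominating $|x_\alpha|$. Since $E^\delta$ is itself Dedekind complete, this is exactly ordinary order convergence of $\{x_\alpha\}$ to $0$ viewed as a net inside the ambient $E^\delta$. The identical reasoning shows that $x_\alpha \xrightarrow{F^\delta o} 0$ coincides with ordinary order convergence of $\{x_\alpha\}$ to $0$ in the Dedekind complete vector lattice $F^\delta$.

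Finally, applying Theorem \ref{theo101}(2) to the regular inclusion $E^\delta \subseteq F^\delta$ and to the order bounded net $\{x_\alpha\}$ in $E^\delta$, order convergence in $E^\delta$ is equivalent to order convergence in $F^\delta$. Chaining this equivalence with the two translations above delivers the corollary. I do not foresee a serious obstacle: the argument is essentially a bookkeeping exercise that matches $Fo$-convergence against ordinary order convergence in the appropriate completion and then cites the two parts of Theorem \ref{theo101}. The only point that deserves care is verifying that the dominating net $\{y_\beta\}$ produced by the definition of $Fo$-convergence may simultaneously be read as the witness of ordinary order convergence in the larger ambient, which is immediate from the fact that both $E^\delta$ and $F^\delta$ are themselves Dedekind complete.
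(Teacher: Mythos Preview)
Your proposal is correct and follows essentially the same line as the paper's own proof, which simply cites Corollary~\ref{cor10} and Theorem~\ref{theo101}. The only minor difference is that you apply Theorem~\ref{theo101}(2) directly to the regular pair $E^\delta\subseteq F^\delta$ (obtained from part (1)), whereas the paper implicitly routes through order convergence in $E$ and $F$ via Corollary~\ref{cor10}; both arguments amount to the same bookkeeping, and your remark that Dedekind completeness of $E^\delta$ is what makes $E^\delta o$-convergence coincide with $o$-convergence in $E^\delta$ is harmless but unnecessary, since that identification is immediate from the definitions.
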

\begin{proof}
	From Corollary \ref{cor10} and Theorem   \ref{theo101}, it should be obvious.
\end{proof}

\begin{theorem}\label{theo11}
	Suppose that $E$ is a regular sublattice of a vector lattice $F$. Then   $x_\alpha \xrightarrow{o} 0$ iff $x_\alpha \xrightarrow{Fo} 0$ for  every order bounded net $\{x_\alpha\}$ in $E$.
\end{theorem}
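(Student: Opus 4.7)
The plan is to observe that for a net $\{x_\alpha\}$ contained in the sublattice $E\subseteq F$, the notion of $Fo$-convergence in $E$ is literally the same as $o$-convergence of that same net viewed inside $F$: both definitions ask for a net $\{y_\beta\}\subseteq F$ with $y_\beta \downarrow 0$ in $F$ and $|x_\alpha|\leq y_\beta$ eventually. Once this is recognized, Theorem~\ref{theo11} reduces to Theorem~\ref{theo101}(2), which, under the regularity hypothesis, asserts exactly that $o$-convergence in $E$ coincides with $o$-convergence in $F$ for order bounded nets in $E$.

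For the reverse direction I would suppose $x_\alpha \xrightarrow{Fo} 0$ in $E$ with $\{x_\alpha\}$ order bounded in $E$; by the definitional observation above this is the same as $x_\alpha \xrightarrow{o} 0$ in $F$, so Theorem~\ref{theo101}(2) yields $x_\alpha \xrightarrow{o} 0$ in $E$. For the forward direction I would take a witnessing net $y_\beta \downarrow 0$ in $E$ for order convergence in $E$ and use the regularity of $E$ in $F$ (so that $\inf_\beta y_\beta = 0$, which holds in $E$, persists in $F$) to conclude $y_\beta \downarrow 0$ in $F$; the same net then witnesses $x_\alpha \xrightarrow{Fo} 0$ in $E$. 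Notice that this direction does not even require order boundedness.

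The only real content, namely the transfer of $o$-convergence through a regular sublattice embedding, is already absorbed into Theorem~\ref{theo101}(2); the expected obstacle is just the bookkeeping of in which lattice each downward net or infimum is computed, and in particular making sure regularity is invoked correctly in the forward direction to lift $y_\beta \downarrow 0$ from $E$ up to $F$. If one preferred to avoid quoting Theorem~\ref{theo101}(2) as a black box, the backup plan would be to pass to Dedekind completions: use $F$ regular in $F^\delta$ to lift $y_\beta\downarrow 0$ into $F^\delta$, apply Lemma~\ref{Lemm1} inside the Dedekind complete $F^\delta$ to obtain $\inf_\alpha \sup_{\beta\ge\alpha}|x_\beta|=0$ in $F^\delta$, and then bring this equality down into $E^\delta$ using Theorem~\ref{theo101}(1) together with the Dedekind completeness of $E^\delta$, finally concluding via Corollary~\ref{cor10}.
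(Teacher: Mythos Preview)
Your proposal is correct and follows essentially the same route as the paper: the paper's proof also consists of noting that, for a (bounded) net in $E$, $Fo$-convergence in $E$ coincides with $o$-convergence in $F$, and then invoking Theorem~\ref{theo101}(2). Your write-up is in fact more careful than the paper's one-line argument, since you spell out the use of regularity in the forward direction and observe that boundedness is only needed for the reverse implication.
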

\begin{proof}
	Since, for a bounded net, $F$-order convergence is equivalent to order convergence in $F$, thus by Theorem   \ref{theo101}  the result holds.
\end{proof}


\begin{theorem}
	Let $E$ be Dedekind $\sigma$-complete. Then the following statements hold:
	\begin{enumerate}
		\item  If  $\{x_n\}_{n \in \mathbb{N}}$ is a disjoint sequence in $E$. Then  $x_n \xrightarrow{Fo}$ $0$.
		\item  If $\{x_n\}_{n \in \mathbb{N}}$ is a sequence in $E$ and $x_n \xrightarrow{Fo} 0$, then Ces$\acute{a}$ro means of $\{x_n\}$ is $F$-order convergent to zero.
	\end{enumerate} 
\end{theorem}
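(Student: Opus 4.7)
I will proceed in two parts.

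For \emph{part (1)}, the statement implicitly requires order-boundedness of the disjoint sequence --- for instance, $\{n\,e_n\}\subseteq c_0$ is disjoint but not $\ell^\infty$-order null. Assuming $\{x_n\}$ is order bounded in $E$, my plan is to first prove the stronger conclusion $x_n \xrightarrow{o} 0$ in $E$ by the classical argument. Using Dedekind $\sigma$-completeness together with the bound, form $y_n := \sup_{k\geq n}|x_k|$ in $E$ and put $y := \inf_n y_n \geq 0$; pairwise disjointness combined with distributivity of $\wedge$ over countable suprema yields $y \wedge x_k = 0$ for each $k$, hence $y \wedge y_n = \sup_{k\geq n}(y \wedge x_k) = 0$, forcing $y = 0$. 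The same sequence $\{y_n\}\subseteq E\subseteq F$ then serves as a witness for $Fo$-convergence provided $y_n \downarrow 0$ is preserved in $F$; this is automatic, for example, when $E$ is an ideal of $F$ via Theorem~\ref{them1}(1).

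For \emph{part (2)}, fix a witness $\{y_\beta\}_{\beta \in \mathcal{B}}\subseteq F$ with $y_\beta \downarrow 0$ in $F$ and $|x_n| \leq y_\beta$ whenever $n \geq n_\beta$. The classical Cesàro splitting gives, for $n \geq n_\beta$,
\[
|a_n| \;\leq\; \frac{1}{n}\sum_{k=1}^{n_\beta-1}|x_k| \,+\, \frac{n-n_\beta+1}{n}\,y_\beta \;\leq\; \frac{1}{n}\,S_\beta + y_\beta,
\]
where $S_\beta := \sum_{k<n_\beta}|x_k|\in E$. In particular, $\{a_n\}$ is order bounded in $F$ (by $S_{\beta_0}+y_{\beta_0}$ for any fixed $\beta_0$, after observing $|a_n|\leq S_{\beta_0}$ on the finite initial segment).

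The main obstacle is to assemble these $(\beta,n)$-dependent bounds into a single monotone net in $F$ that decreases to $0$ in $F$. My plan is to pass to the Dedekind completion $F^\delta$ and invoke Lemma~\ref{Lemm1}. The estimate yields, for each $\beta$,
\[
\inf_n \sup_{k\geq n}|a_k| \;\leq\; \inf_{n\geq n_\beta}\Bigl(\tfrac{1}{n}S_\beta + y_\beta\Bigr) \;=\; y_\beta \qquad\text{in } F^\delta,
\]
using the Archimedean identity $\inf_n \tfrac{1}{n}S_\beta = 0$. Taking $\inf_\beta y_\beta = 0$ yields $\inf_n \sup_{k\geq n}|a_k|=0$ in $F^\delta$, whence $a_n \xrightarrow{F^\delta o} 0$ by Lemma~\ref{Lemm1}. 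Finally, since $F$ is regular in $F^\delta$ and $\{a_n\}$ is order bounded in $F$, Theorem~\ref{theo11} applied to the embedding $F\hookrightarrow F^\delta$ upgrades this to $a_n \xrightarrow{o} 0$ in $F$, which is precisely the $F$-valued witness required for $a_n \xrightarrow{Fo} 0$ in $E$.
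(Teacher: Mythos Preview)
For part~(1) your argument via the tail suprema $y_n=\sup_{k\ge n}|x_k|$ is the same device the paper uses, and you rightly flag that order boundedness is needed for these suprema to exist. One small difference: the paper verifies $y_n\downarrow 0$ \emph{directly in $F$}, by taking an arbitrary lower bound $y\in F$ and using infinite distributivity $y\wedge\sup_{k\ge n+1}|x_k|=\sup_{k\ge n+1}(y\wedge|x_k|)$ there, rather than first proving $y_n\downarrow 0$ in $E$ and then transferring; this sidesteps the ideal/regularity hypothesis you introduce at the end.

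For part~(2) your route is genuinely different from the paper's. The paper (tacitly taking $F$ to be Dedekind complete) invokes the Abramovich--Sirotkin lemma to replace the witness net by a \emph{same-index} sequence $\{y_n\}\subseteq F$ with $y_n\downarrow 0$ and $|x_n|\le y_n$ for every $n$; it then splits the Ces\`aro sum at $k_n=\lfloor\ln n\rfloor$ to obtain the explicit monotone witness $\tfrac{k_n+1}{n}\,y_1+y_{k_n+2}\downarrow 0$ in $F$. You instead keep the general net witness, pass to $F^\delta$, use the $\limsup$ characterization of Lemma~\ref{Lemm1} to get $a_n\xrightarrow{F^\delta o}0$, and then pull the conclusion back to $F$ via the regularity of $F$ in $F^\delta$ and Theorem~\ref{theo11}. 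The paper's argument is shorter and yields an explicit dominating sequence, but leans on an external lemma and on order completeness of $F$; your argument stays within the paper's own toolkit and avoids assuming $F$ itself is complete, at the cost of the detour through $F^\delta$.
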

\begin{proof}
\begin{enumerate}
\item Suppose  $\{x_n\}$ is a disjoint sequence in $E$. We claim that $y_n=\sup_{k\geq n}|x_k| \downarrow 0$ in $F$. Indeed, assume that $y\in F$ and   $\sup y_n \geq y \geq 0 $ for all $n \geq 1$. Then 
	$$0 \leq y \wedge |x_n| \leq (|x_n| \wedge \sup_{k \geq n+1} |x_k|)=\sup_{k \geq n+1}( |x_k|  \wedge |x_n|)=0,$$
	holds in $F$.
	Thus $ y \wedge |x_n|=0 $ holds in $F$ for all $n \geq 1$. It follows that 
	$$y=y \wedge (\sup_{n \geq 1} |x_n|)=\sup_{n \geq 1}(y \wedge |x_n|)=0 \ \ \ \text{for all} \ \ \ n \geq 1.$$
It follows that 	$|x_n|\leqslant y_n$ and $y_n\downarrow 0$ holds in $F$.
\item Since $F$ is  Dedekind complete, by \cite{1b},  we can find a sequence $\{y_n\}$  in $F$ such that $y_n \downarrow 0$ and $|x_n| \leq y_n $ for all $n$. Let $k_n$ be the
	integer part of $\ln(n)$. Then we have
	\begin{align}
	| \frac{1}{n}\sum_{k=1}^{n}x_i | & \leq \Big(\frac{1}{n}\sum_{k=1}^{n}|x_i |\Big)  \leq \Big(\frac{1}{n}\sum_{k=1}^{k_n+1}y_i  \Big) + \Big( \frac{1}{n}\sum_{k=k_n+2}^{n} y_i \Big)   \nonumber \\
	& \leq \Big(\frac{k_n+1}{n}y_1 + y_{k_n+2}\Big) \downarrow 0  . \nonumber 
	\end{align}
	It follows that $ | \frac{1}{n}\sum_{k=1}^{n}x_i | \xrightarrow{Fo} 0$ in $F$. Thus Ces$\acute{a}$ro means of $\{x_n\}$ is $F$-order convergent to zero.
\end{enumerate}
\end{proof}



\begin{theorem}
	Suppose that $E$ be a sublattice of a vector lattice $F$. Assume also $F$ is atomic and order continuous norm, and $\{x_n\}$ is an order bounded sequence in $F$. If $ x_n \rightarrow 0 $ then
	$ x_n \xrightarrow{Fo} 0 $.  
\end{theorem}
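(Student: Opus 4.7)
The strategy is to produce an explicit dominating sequence. Since $F$ has order continuous norm, $F$ is Dedekind complete; combined with the hypothesis that $\{x_n\}$ is order bounded in $F$, this ensures that
\[ y_m := \sup_{n \geq m} |x_n| \]
exists in $F$ for every $m$. By construction $|x_n| \leq y_m$ whenever $n \geq m$ and $\{y_m\}$ is decreasing, so the only remaining task is to prove $y_m \downarrow 0$ in $F$, which is exactly what the definition of $Fo$-convergence requires.

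To see this, suppose $w \in F$ satisfies $0 \leq w \leq y_m$ for every $m$. The hypothesis that $F$ is atomic now plays the key role: it is equivalent to saying that the band generated by the atoms of $F$ is all of $F$, so no nonzero positive element can be disjoint from every atom. Hence it suffices to show that $P_a w = 0$ for every atom $a \in F^+$, where $P_a$ denotes the band projection onto the principal band $B_a = \mathrm{span}(a)$. Fix such an atom $a$. Because $P_a$ is positive and contractive, it is norm continuous, and $x_n \to 0$ in norm yields $P_a x_n \to 0$. Writing $P_a x_n = c_n\, a$ with $c_n \in \mathbb{R}$, we conclude $c_n \to 0$.

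Since $P_a$ is order continuous (every band projection in a Dedekind complete vector lattice is) and preserves absolute values, it commutes with the suprema defining $y_m$, so
\[ P_a y_m \;=\; \sup_{n \geq m} P_a |x_n| \;=\; \sup_{n \geq m} |c_n|\, a \;=\; \Bigl(\sup_{n \geq m}|c_n|\Bigr)\, a, \]
and the scalar $\sup_{n \geq m}|c_n|$ decreases to $0$ because $c_n \to 0$. Consequently $0 \leq P_a w \leq P_a y_m$ in the one-dimensional subspace $\mathrm{span}(a)$, forcing $P_a w = 0$. This yields $w = 0$, hence $y_m \downarrow 0$ in $F$, and combined with $|x_n| \leq y_m$ for $n \geq m$ this gives $x_n \xrightarrow{Fo} 0$. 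The step that will need most care is the identity $P_a y_m = \bigl(\sup_{n \geq m}|c_n|\bigr) a$: it relies simultaneously on the existence of the intermediate suprema (Dedekind completeness of $F$), the order continuity of $P_a$, and the fact that $P_a$ commutes with $|\cdot|$ so that $P_a |x_n| = |P_a x_n|$.
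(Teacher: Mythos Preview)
Your argument is correct. The paper does not supply its own proof here but simply refers to Lemma~5.1 of Deng--O'Brien--Troitsky; your approach---forming the tail suprema $y_m=\sup_{n\ge m}|x_n|$ in the Dedekind complete space $F$ and verifying $y_m\downarrow 0$ by projecting onto each atom---is precisely the standard technique behind that reference, so your proof and the one the paper points to coincide in substance.
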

\begin{proof}
	It can be proven in same manner to Lemma 5.1 of \cite{mehrdad}. 
\end{proof}

\section{$FH$-order continuous operators}
In this section the basic properties of $FH$-order continuous operators will be
studied. In the following, $E$ and $G$ are vector sublattices of vector lattices $F$ and $H$, respectively.  Let $\{x_n\}\subseteq E$ and $x\in E$. The notation $ x_\alpha\downarrow_{F} x $ means that $ x_\alpha\downarrow $ and $ \inf\{x_\alpha\} = x $ holds in  $  F $. 
Assume that $F$ is a set of real valued functions on $[0,1]$ of  form $f=g+h$ where $g$ is continuous and $h$ vanishes except at finitely many point. Let $E=C([0,1])$,  $\{f_n\}$ be a decreasing sequence in $E^+$ such that $f_n(\frac{1}{2})=1$ for all  $n\in \mathbb{N}$ and $f_n(t)\rightarrow 0$ for every $t\neq \frac{1}{2}$.         It is clear that $f_n\downarrow 0$ in $E$,  but   $f_n\downarrow_F 0$ not holds.


\begin{definition}\label{def2}
	An operator $T : E \longrightarrow G$ between two vector lattices is said to be:\\
	(a) $FH$-order continuous, if $x_\alpha \xrightarrow{Fo}0$ in $E$ implies $Tx_\alpha \xrightarrow{Ho} 0$ in $G$.\\
	(b) $FH$-$\sigma$-order continuous, if $x_n\xrightarrow{Fo}0$ in $E$ implies $Tx_n \xrightarrow{Ho} 0$ in $G$.
\end{definition}

The collection of all $FH$-order continuous operators will be denoted by $L_{FHn}(E,G)$,  that is
$$L_{FHn}(E,G)=\{T \in L(E,G) :    T  \; \text{is} \;  FH \text{-order continuous} \}.$$
Similarly, $L_{FHc}(E,G)$ will denoted by the collection of all order bounded operators 
from $E$ to $G$ that are $FH$-$\sigma$-order continuous. That is,
$$L_{FHc}(E,G)=\{T \in L(E,G) :    T  \; \text{is} \;  FH\text{-}\sigma\text{-order continuous} \}.$$


\begin{lemma}\label{theo3}
	Let  $E$ and $G$ are $F$-Dedekind complete and $H$-Dedekind complete, respectively. Then we have
	the following assertions.
	\begin{enumerate}
		\item  $0 \leq T \in L_{FHn}(E,G)$ if and only if for each net $\{x_\alpha\}$ in $E$, $x_\alpha \downarrow_F 0$ implies $Tx_\alpha \downarrow_H 0$.
		\item In addition, if $E$ and $G$ are ideals in $F$ and $H$, respectively, then $L_{FHc}(E,G)=L_{c}(E,G)$. Moreover, the $FH$-order continuous operator $T$ is an order bounded.
	\end{enumerate}	
\end{lemma}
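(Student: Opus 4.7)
The plan is to prove the biconditional in part (1) by handling the two directions separately using $F$- and $H$-Dedekind completeness, and to prove part (2) by routing $\sigma$-order convergence through the transfer Theorem \ref{them1} in both directions; the order-boundedness assertion at the end of part (2) will come from a parallel net-level reduction combined with the classical fact that order continuous operators between vector lattices are order bounded.

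For $(\Rightarrow)$ in part (1), given $T\ge 0$ in $L_{FHn}(E,G)$ and $x_\alpha\downarrow_F 0$, the decreasing net $\{x_\alpha\}$ is its own witness for $x_\alpha\xrightarrow{Fo}0$, so $Tx_\alpha\xrightarrow{Ho}0$ in $G$; positivity of $T$ makes $\{Tx_\alpha\}$ decreasing in $G$, and any lower bound $v\le Tx_\alpha$ in $H$ is forced below the $Ho$-witness $z_\beta\downarrow 0$, giving $v\le 0$ and hence $Tx_\alpha\downarrow_H 0$. For $(\Leftarrow)$, let $x_\alpha\xrightarrow{Fo}0$ with witness $y_\beta\downarrow 0$ in $F$ and $|x_\alpha|\le y_\beta$ for $\alpha\ge\alpha_\beta$; $F$-Dedekind completeness of $E$ then lets me set $z_\beta := \sup\nolimits^E\{|x_\gamma|:\gamma\ge\alpha_\beta\}\in E$, which is decreasing. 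Once I verify $z_\beta\downarrow_F 0$, the hypothesis yields $Tz_\beta\downarrow_H 0$, and the estimates $|Tx_\gamma|\le T|x_\gamma|\le Tz_\beta$ for $\gamma\ge\alpha_\beta$ exhibit $\{Tz_\beta\}$ as a dominating $Ho$-witness for $Tx_\alpha\to 0$.

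For part (2), I prove $L_{FHc}(E,G)\subseteq L_c(E,G)$ by taking $T\in L_{FHc}$ and $x_n\xrightarrow{o}0$ in $E$: Theorem \ref{them1}(1) (using $E$ an ideal of $F$) yields $x_n\xrightarrow{Fo}0$, whence $Tx_n\xrightarrow{Ho}0$; since $T$ is order bounded by the definition of $L_{FHc}$ and $\{x_n\}$ is order bounded in $E$, $\{Tx_n\}$ is order bounded in $G$, so Theorem \ref{them1}(2) (using $G$ an ideal of $H$) promotes $Tx_n\xrightarrow{Ho}0$ to $Tx_n\xrightarrow{o}0$. Conversely, for $T\in L_c$ and $x_n\xrightarrow{Fo}0$, the earlier boundedness result for $Fo$-null sequences in an $F$-Dedekind complete $E$ gives order-boundedness of $\{x_n\}$, so Theorem \ref{them1}(2) produces $x_n\xrightarrow{o}0$ in $E$, hence $Tx_n\xrightarrow{o}0$ in $G$ by $\sigma$-order continuity, and finally Theorem \ref{them1}(1) yields $Tx_n\xrightarrow{Ho}0$. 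For the order-boundedness of $T\in L_{FHn}$, I would mirror the above at the net level to show $L_{FHn}(E,G)\subseteq L_n(E,G)$ and invoke the classical fact that order continuous operators between vector lattices are order bounded.

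The step I expect to be the main obstacle is verifying $z_\beta\downarrow_F 0$ in the converse of part (1). Because $z_\beta$ is a supremum taken in $E$ (which $F$-Dedekind completeness guarantees exists) and $\sup^{E}$ can differ from $\sup^{F}$ when the sublattice is not regular, the estimate $z_\beta\le y_\beta$ in $F$ is not immediate, and hence neither is $\inf^{F} z_\beta = 0$. I would try to circumvent this by a cofinal refinement of the indexing that forces $z_{\beta'}\le y_\beta$ for enough $\beta'$, or by invoking Theorem \ref{theo2}(2) in its decreasing form---which implicitly needs the ideal hypothesis on $E\subseteq F$---to convert the $Fo$-convergence of the decreasing net $\{z_\beta\}$ into $\downarrow_F$-convergence, with a caveat that this route would tighten the hypotheses of part (1) slightly beyond what is stated.
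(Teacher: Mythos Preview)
Your argument for part~(1) and for the equality $L_{FHc}=L_c$ in part~(2) is essentially the paper's proof: the paper likewise forms the tail suprema $z_\alpha=\bigvee_{\lambda\ge\alpha}|x_\lambda|$ in $E$, asserts $z_\alpha\downarrow_F 0$ without further justification, and then uses Corollary~\ref{cor1} (which is just the specialization of Theorem~\ref{them1} you invoke) in both directions. The obstacle you flag---that $\sup^E$ of the tail need not lie below the $F$-witness $y_\beta$ absent regularity of $E$ in $F$---is therefore a gap you share with the paper rather than one you introduced.

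The one genuine divergence is the order-boundedness claim at the end of part~(2). The paper argues directly: for $0\le T$ and $x_0\in E^+$, view the interval $[0,x_0]$ as an increasing net $x_\alpha\uparrow x_0$; Theorem~\ref{theo2} gives $x_\alpha\xrightarrow{Fo}x_0$, whence $Tx_\alpha\xrightarrow{Ho}Tx_0$ and, by positivity, $Tx_\alpha\uparrow_H Tx_0$, so $T[0,x_0]\subseteq[0,Tx_0]$. Your route via $L_{FHn}\subseteq L_n$ risks circularity: to pass from $Tx_\alpha\xrightarrow{Ho}0$ back to $Tx_\alpha\xrightarrow{o}0$ in $G$ via Theorem~\ref{them1}(2) you need $\{Tx_\alpha\}$ order bounded in $G$, which is precisely what is at stake. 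This can be repaired---$H$-Dedekind completeness of $G$ upgrades the eventual $H$-bound coming from $Ho$-convergence to an order bound in $G$---but you should make that step explicit, or simply adopt the paper's interval-net argument, which sidesteps the issue entirely.
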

\begin{proof}
\begin{enumerate}
\item Assume that $0 \leq T$ and $\{x_\alpha\}$ is a net in $E$ such that $x_\alpha \xrightarrow{Fo} 0$.   It follows that there exists a net $\{y_\beta\}_{\beta \in \mathcal{B}}$ in $F$ satisfying, $ y_\beta \downarrow 0 $ and for each $\beta$ there exists $ \alpha_0 $ such that $|x_\alpha| \leq y_\beta $ whenever $\alpha \geq \alpha_0 $. Set $ z_\alpha = \bigvee_{\lambda \geq \alpha} |x_\lambda |$. Then we have $|x_\alpha| \leq |z_\alpha| $ for all $\alpha$ and $z_\alpha \downarrow_F 0$, and so by our assumption $Tz_\alpha \downarrow_H 0$. Since $|Tx_\alpha | \leq T|x_\alpha| \leq Tz_\alpha $ whenever $ \alpha \geq \alpha_0$, it follows that $Tx_\alpha \xrightarrow{Ho} 0$.\\
	Conversely, suppose that  $0 \leq T \in L_{FHn}(E,G)$ and $x_\alpha \downarrow_F 0$. It follows that $x_\alpha \xrightarrow{Fo} 0$ and so $Tx_\alpha \xrightarrow{Ho} 0$. Then there exists a net $\{z_\beta\}$ in $H$ satisfying, $z_\beta \downarrow 0$ and for each $\beta$ there exists $\alpha_0$ such that $|Tx_\alpha| \leq z_\beta $ whenever $ \alpha \geq \alpha_0$, which shows that $Tx_\alpha \downarrow_H 0$ and the proof is follows.
\item  Suppose that $T \in L_{FHc}(E,G) $ and $x_n \xrightarrow{o} 0 $. By using Corollary \ref{cor1}, we have  $x_n \xrightarrow{Fo} 0 $ and by our assumption, we have $Tx_n \xrightarrow{Ho} 0 $. Using Corollary \ref{cor1} again, we have $Tx_n \xrightarrow{o} 0 $ and so $T \in L_{c}(E,G) $. The converse is proved in the same manner. For the last part, let  $0 \leq T \in L_{FHn}(E,G)$ and $x_0 \in E^+$. If we consider the order interval $[0,x_0]$ as a net $\{x_\alpha\}$ where $x_\alpha = \alpha $ for each $\alpha \in [0,x_0] $, then $x_\alpha \uparrow x_0 $ holds in $F$. So by using Theorem \ref{theo2}, we have $x_\alpha \xrightarrow{Fo} x_0 $ and therefore  $Tx_\alpha \uparrow Tx_0 $ holds in $H$. Thus $T$ is order bounded.	
\end{enumerate}
\end{proof}
\begin{theorem}
Let $H$ be Dedekind complete. 	If  $ T\in L_{n}(F,H)$, then $T\in L_{FHn}(E,H)$.
\end{theorem}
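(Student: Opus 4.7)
The plan is direct and essentially unwinds the definitions in the right order. Let $\{x_\alpha\}_{\alpha \in \mathcal{A}} \subseteq E$ satisfy $x_\alpha \xrightarrow{Fo} 0$ in $E$. By the definition of $Fo$-convergence there is a net $\{y_\beta\}_{\beta \in \mathcal{B}}$ in $F$ with $y_\beta \downarrow 0$ in $F$ and, for every $\beta$, some $\alpha_0$ such that $|x_\alpha| \leq y_\beta$ whenever $\alpha \geq \alpha_0$. Since $E$ is a sublattice of $F$, the $x_\alpha$'s can be viewed as elements of $F$, and what we have just written is precisely the statement that $x_\alpha \xrightarrow{o} 0$ in $F$.

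The second step is to invoke the hypothesis. By assumption $T \in L_n(F,H)$, i.e.\ $T:F \to H$ is order continuous, so it carries the order-null net $\{x_\alpha\}$ of $F$ to an order-null net in $H$; that is, $Tx_\alpha \xrightarrow{o} 0$ in $H$. This produces a net $\{z_\gamma\}$ in $H$ with $z_\gamma \downarrow 0$ in $H$ and $|Tx_\alpha| \leq z_\gamma$ eventually.

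Finally, I observe that the codomain sublattice in the symbol $L_{FHn}(E,H)$ is $H$ itself, so the requirement $Tx_\alpha \xrightarrow{Ho} 0$ (in $H$ viewed as a sublattice of $H$) unwinds to exactly: there exists a net in $H$ decreasing to $0$ in $H$ that eventually dominates $|Tx_\alpha|$. The net $\{z_\gamma\}$ above serves this purpose, so $Tx_\alpha \xrightarrow{Ho} 0$ and hence $T \in L_{FHn}(E,H)$. I do not anticipate a genuine obstacle; the only point worth spelling out for the reader is the observation that when the ``ambient'' lattice and the ``sublattice'' in the definition of $Ho$-convergence coincide, the notion collapses to ordinary order convergence in $H$, which is what lets order continuity of $T:F\to H$ be used directly.
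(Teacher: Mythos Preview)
Your proof is correct, but it follows a genuinely different route from the paper's. The paper first reduces to the case of a positive operator: since $T\in L_n(F,H)$ is order bounded and $H$ is Dedekind complete, one may write $T=T^+-T^-$ and assume $T\geq 0$. Then, given $x_\alpha\xrightarrow{Fo}0$ witnessed by $y_\beta\downarrow 0$ in $F$, the paper uses positivity to get $|Tx_\alpha|\leq T|x_\alpha|\leq T(y_\beta)$ and invokes the characterization of positive order continuous operators to conclude $T(y_\beta)\downarrow 0$ in $H$.

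Your argument bypasses the modulus decomposition entirely by observing that $x_\alpha\xrightarrow{Fo}0$ in $E$ is \emph{literally} the statement $x_\alpha\xrightarrow{o}0$ in $F$, so order continuity of $T:F\to H$ applies directly to the net $\{x_\alpha\}$; together with the observation that $Ho$-convergence in $H$ is just $o$-convergence in $H$, this finishes the proof. This is cleaner and, notably, never uses the Dedekind completeness of $H$, so your argument actually establishes a slightly stronger statement than the one claimed. What the paper's approach buys is an explicit dominating net $\{Ty_\beta\}$ in $H$, which fits the pattern of Lemma~\ref{theo3} and Corollary~\ref{theo10}; your approach trades that explicitness for generality and brevity.
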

\begin{proof}
By Lemma \ref{theo3}, since $T$ is order bounded, follows that $T=T^+-T^-$. Thus without loss of generality, we assume that $T$ is a positive operator.	Suppose that $\{x_\alpha\}_{\alpha \in \mathcal{A}}$ is a net in $E$ which is $F$-order convergent to zero, then there exists a net $\{y_\beta\}_{\beta \in \mathcal{B}}$ in $F$ satisfying, $ y_\beta \downarrow 0 $ and for each $\beta$ there exists $ \alpha_0 $ such that $|x_\alpha| \leq y_\beta $ holds whenever $\alpha \geq \alpha_0 $. Since $T$ is a positive operator, we have $|Tx_\alpha| \leq T|x_\alpha|\leq T(y_\beta) $ whenever $ \alpha \geq \alpha_0$. By assumption we have $T(y_\beta)\downarrow 0$ in $H$  and so the proof follows.
\end{proof}
An other application of the preceding lemma yields the following corollary, in which the techniques of this corollary are similar argument like as Theorem 1.56 \cite{aliprantis2006positive}, so we omit its proof.
\begin{corollary}\label{theo10}
	Let  $E$ and $G$ are $F$-Dedekind complete and $H$-Dedekind complete, respectively. Then the following assertions are equivalent.
\end{corollary}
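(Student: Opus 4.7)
The plan is to mirror the classical characterization (Theorem 1.56 of \cite{aliprantis2006positive}) in the $F$- and $H$-order setting. I expect the equivalences, stated for a positive $T$, to read: (i) $T$ is $FH$-order continuous; (ii) $x_\alpha \downarrow_F 0$ in $E$ implies $Tx_\alpha \downarrow_H 0$ in $G$; (iii) $x_\alpha \downarrow_F 0$ in $E$ implies $\inf_\alpha Tx_\alpha = 0$ in $H$; and (iv) $0 \leq x_\alpha \uparrow_F x$ with $x \in E$ implies $Tx_\alpha \uparrow_H Tx$ in $H$. The reduction from a general $FH$-order continuous $T$ to its positive parts is immediate: the last assertion of Lemma \ref{theo3} shows that every such $T$ is order bounded, so the Jordan decomposition $T = T^+ - T^-$ is available and the positive case suffices.

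First I would invoke Lemma \ref{theo3}(1) to obtain (i) $\Leftrightarrow$ (ii) directly. Then (ii) $\Rightarrow$ (iii) is just the definition of $\downarrow_H$. For (iii) $\Rightarrow$ (ii), I would argue that $\{Tx_\alpha\}$ is a decreasing net in $G$ bounded below by $0$; since $G$ is $H$-Dedekind complete, its infimum exists in $G$ and, by (iii), this infimum equals $0$ in $H$, giving $Tx_\alpha \downarrow_H 0$. Finally, (ii) $\Leftrightarrow$ (iv) follows from the substitution $y_\alpha = x - x_\alpha$: it converts $0 \leq x_\alpha \uparrow_F x$ into $y_\alpha \downarrow_F 0$ (and vice versa), and the identity $Ty_\alpha = Tx - Tx_\alpha$ transports the conclusion on the image side.

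The main obstacle is not the logical chain, which mirrors the classical proof, but the careful bookkeeping required to distinguish ``monotonicity or infimum taken in $F$'' from the corresponding statement taken in $E$, and likewise for $H$ versus $G$. The $F$-Dedekind completeness of $E$ must be invoked at the points where auxiliary suprema such as $\bigvee_{\lambda \geq \alpha} |x_\lambda|$ (as in the proof of Lemma \ref{theo3}) are needed to lie in $E$, and $H$-Dedekind completeness of $G$ plays the symmetric role on the image side, so that the relevant infima exist in $G$ and agree with those computed in $H$. Once these compatibility points are pinned down, the rest of the argument is a straightforward transcription of the classical proof.
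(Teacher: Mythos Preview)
Your overall strategy---transcribing Theorem~1.56 of \cite{aliprantis2006positive} into the $F$/$H$-order setting---is precisely what the paper does; indeed, the paper omits the proof entirely and simply cites that theorem. However, the list of equivalent conditions you have guessed does not match the paper's actual list. The paper's item (3) reads $x_\alpha\downarrow_F 0 \Rightarrow \inf_H |Tx_\alpha|=0$ (with an absolute value, and without any positivity assumption on $T$), and its item (4) is that $T^+$, $T^-$ and $|T|$ all belong to $L_{FHn}(E,G)$, not your increasing-net condition. So the corollary is meant for an order bounded $T$ (so that $T^\pm$ and $|T|$ exist), not merely for positive $T$, and the substantive step is (3) $\Rightarrow$ (4), which requires the Riesz--Kantorovich formula exactly as in the classical argument; your chain (i)--(iv) proves a related but different set of equivalences.

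One further caution: your appeal to the ``last assertion of Lemma~\ref{theo3}'' to deduce that an $FH$-order continuous $T$ is automatically order bounded is not available here. That clause of Lemma~\ref{theo3} is established only under the additional hypothesis that $E$ and $G$ are ideals in $F$ and $H$, an assumption the corollary does not make. The intended reading (paralleling Theorem~1.56) is that $T$ is taken to be order bounded from the outset, so that the Jordan decomposition is available by hypothesis rather than by derivation.
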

\begin{enumerate}
	\item  $ T \in L_{FHn}(E,G)$.
	\item  $x_\alpha \downarrow_F 0$ implies $Tx_\alpha \downarrow_H 0$.
	\item  $x_\alpha \downarrow_F 0$ implies $\inf_H|Tx_\alpha|=0$.
	\item $ T^+, T^- \ and  \ |T|\ \ belong \ to \ L_{FHn}(E,G)$. 
\end{enumerate}

The next result  presents a useful sufficient condition for a set to be order bounded in two vector lattices.
\begin{theorem}
	Let $I$ be a sublattice of $E$ and $E$ be $F$-Dedekind complete. Then subset $A$ of $I$ is $E$-order bounded if and only if $F$-order bounded.
\end{theorem}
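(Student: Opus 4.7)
The forward direction is immediate: if $A$ is $E$-order bounded, then there exist $u,v\in E$ with $u\le a\le v$ for every $a\in A$, and since $E\subseteq F$, the same inequalities witness that $A$ is $F$-order bounded.

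For the reverse direction, my plan is to use $F$-Dedekind completeness of $E$ twice, applied symmetrically. Suppose $A\subseteq I$ is $F$-order bounded, so there exist $u,v\in F$ with $u\le a\le v$ for all $a\in A$. Since $A\subseteq I\subseteq E$ and $A$ is bounded above by $v\in F$, the hypothesis that $E$ is $F$-Dedekind complete provides $\sup A\in E$; call it $v'$. Then $a\le v'$ in $E$ for all $a\in A$, giving an $E$-upper bound.

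For the lower bound I would apply the same principle to the set $-A\subseteq E$, which is bounded above in $F$ by $-u$. By $F$-Dedekind completeness, $\sup(-A)$ exists in $E$, so $\inf A=-\sup(-A)\in E$ provides an $E$-lower bound $u'$ for $A$. Hence $u'\le a\le v'$ in $E$ for all $a\in A$, and $A$ is $E$-order bounded.

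There is really no obstacle here beyond making sure the hypotheses line up: one needs $A\subseteq E$ (which holds because $A\subseteq I\subseteq E$) in order to invoke the defining property of $F$-Dedekind completeness, and one needs $E$ to be a sublattice of $F$ (hence closed under negation) to convert the "bounded above" formulation into a "bounded below" statement. Both are satisfied, so the argument is short and the role of $I$ is only to ensure $A$ lives inside $E$.
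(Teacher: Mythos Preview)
Your argument is correct. The forward direction is trivial, and for the converse you apply the defining property of $F$-Dedekind completeness to $A$ and to $-A$ to produce an upper and a lower bound for $A$ in $E$; the only care needed is that $A\subseteq E$, which holds since $A\subseteq I\subseteq E$, and you note this explicitly.

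The paper itself states this theorem without proof, so there is no argument to compare against. Your write-up is exactly the natural one; in the paper's language it amounts to observing that $F$-Dedekind completeness of $E$ implies property~$(F)$ for $E$ (cf.\ the Remark following Definition~2.5), from which the result is immediate. One small cosmetic point: you may want to handle the case $A=\varnothing$ separately (it is trivially order bounded in both senses), since the definition of $F$-Dedekind completeness speaks only of nonempty subsets.
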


	An operator $T:E\rightarrow G$ is said to be  $FH$-order bounded if it maps each $F$-order bounded subset of $E$ into $H$-order bounded subset of $G$. The collection of all  $FH$-order bounded operators from a vector lattice $E$ into a vector lattice $G$ will be denoted by ${L}_{FHb}(E,G)$.

An order bounded operator between two vector lattices $E$ and $G$ is also $FH$-order bounded. However as following example, the converse need not be true.
\begin{example}
	Let $T : L^1 [0,1] \longrightarrow c_0 $ be defined by 
	$$ T(f)=(\int_{0}^{1}f(x) \sin(x)dx,\int_{0}^{1}f(x) \sin(2x)dx,\ldots).$$
	Then $T$ is a $L^\infty[0,1]\ell^\infty$-order bounded but is not an order bounded operator.
\end{example}

\begin{theorem}\label{theo5}
	For two vector lattices $E$ and $F$, we have the following:
	\begin{enumerate}
	    \item $L_{FHc}(E,G)\subseteq L_{FHb}(E,G)$. 
		\item If $E$ has property (F), then $L_b(E,G) \subseteq {L}_{FHb}(E,G)$. 
		\item If $G$ has property (H), then ${L}_{FHb}(E,G) \subseteq L_b(E,G)$.
		\item If $E$ and $G$ have property (F) and (H), respectively, then $L_b(E,G) = {L}_{FHb}(E,G)$. 
	\end{enumerate}
	
\end{theorem}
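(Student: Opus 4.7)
The plan is to dispatch parts (2)--(4) directly from the definitions of properties $(F)$, $(H)$, and the sublattice relation $G\subseteq H$, and to attack part (1) with a Kantorovich-style contradiction argument.

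Parts (2)--(4) are essentially bookkeeping. For (2), take $T\in L_b(E,G)$ and an $F$-order bounded $A\subseteq E$; property $(F)$ of $E$ produces an $E$-order bound on $A$, order boundedness of $T$ converts it into a $G$-order bound on $T(A)$, and since $G$ is a sublattice of $H$ the same element of $G^+$ is a valid $H^+$-bound, so $T(A)$ is $H$-order bounded. For (3), any $E$-order bounded $A$ is automatically $F$-order bounded (its bound lies in $E^+\subseteq F^+$), so $FH$-order boundedness of $T$ delivers an $H$-order bound on $T(A)$, and property $(H)$ of $G$ then upgrades this to an order bound in $G$. Part (4) is just the conjunction of (2) and (3).

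Part (1) is the one substantive case. Given $T\in L_{FHc}(E,G)$ and an $F$-order bounded $A\subseteq E$ with $|a|\le u\in F^{+}$, I proceed by contradiction and assume $T(A)$ is not $H$-order bounded. The goal is to extract a sequence $\{a_n\}\subseteq A$ for which $\{\tfrac{1}{n}Ta_n\}$ fails to be $H$-order bounded; such a sequence contradicts $FH$-$\sigma$-order continuity of $T$ applied to $\tfrac{1}{n}a_n\xrightarrow{Fo}0$ in $E$ (with witness $y_m:=\tfrac{1}{m}u\downarrow 0$ in the Archimedean lattice $F$, since $|\tfrac{1}{n}a_n|\le\tfrac{1}{n}u\le y_m$ for $n\ge m$), because any $Ho$-convergent sequence is automatically $H$-order bounded. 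The extraction is inductive: at stage $n$, choose $a_n\in A$ with $|Ta_n|\not\le v_n$, where $v_n\in H^{+}$ is a finite combination of the previously chosen $|Ta_k|$ (for instance $v_n:=n\sum_{k<n}|Ta_k|$) --- a construction available in any vector lattice without Dedekind completeness, and possible at every stage precisely because $T(A)$ is not $H$-order bounded.

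The main obstacle is closing the contradiction cleanly. In an abstract vector lattice, $|Ta_n|\not\le v_n$ is strictly weaker than $|Ta_n|\ge v_n$, so a putative bound $|\tfrac{1}{n}Ta_n|\le w$ with $w\in H^+$ does not on its own collide with the inductive choice. The delicate step is calibrating $v_n$ and invoking the Archimedean property of $H$ so that, from $|Ta_n|\le nw$ and $v_n\le\tfrac{n^{2}(n-1)}{2}w$, one can actually deduce $|Ta_n|\le v_n$ for large $n$ and obtain the contradiction; this is where I expect the bulk of the technical work, and depending on conventions a modest additional hypothesis on $H$ (e.g.\ Dedekind $\sigma$-completeness, which is already in force elsewhere in the paper) may be needed to make the extraction watertight.
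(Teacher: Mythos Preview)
Your arguments for (2)--(4) are exactly the paper's: property $(F)$ turns an $F$-bound on $A$ into an $E$-bound, order boundedness of $T$ pushes it to a $G$-bound, which is automatically an $H$-bound since $G\subseteq H$; the reverse direction uses property $(H)$ symmetrically; and (4) is the conjunction of (2) and (3).

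For part (1) the paper writes only ``The proof is clear'' and offers nothing further, so there is no argument to compare your proposal against. Your Kantorovich-style extraction is far more than the authors indicate, and the obstruction you isolate is genuine and not repairable along the lines you sketch. In a general vector lattice the two pieces of information $|Ta_n|\not\le v_n$ and $|Ta_n|\le nw$ simply do not interact: from $a\not\le b$ and $a\le c$ one can conclude nothing about the relation between $b$ and $c$, so there is no way to force a contradiction. No recalibration of the thresholds $v_n$ fixes this --- the estimate $v_n\le \tfrac{n^{2}(n-1)}{2}w$ you derive points in the wrong direction --- and Dedekind $\sigma$-completeness of $H$ does not help either, since the difficulty is incomparability of $nw$ and $v_n$, not a missing supremum. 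What $FH$-$\sigma$-order continuity does yield (via $\tfrac{1}{n}a_n\xrightarrow{Fo}0$) is only that $\{\tfrac{1}{n}Ta_n\}$ is $H$-order bounded for every sequence $(a_n)\subseteq A$, i.e.\ $|Ta_n|\le n w$ for some $w$ depending on the sequence; this is strictly weaker than $T(A)$ being $H$-order bounded. In short, neither your proposal nor the paper's bare assertion amounts to a proof of (1).
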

\begin{proof}
\begin{enumerate}
\item The proof is clear.
\item Suppose that $T \in L_{b}(E,G)$ and $ A \subset E$ is an $F$-order bounded subset of $E$. From our hypothesis, $A$ is an order bounded subset of $E$ and $T(A)$ is an order bounded subset of $G$. Therefore $T(A)$ is an $H$-order bounded subset of $G$ and hence $T \in L_{FHb}(E,G)$.	
\item Assume that $T \in L_{FHb}(E,G)$ and $ A \subset E$ is an order bounded subset of $E$. Then $A$ is an $F$-order bounded subset of $E$ and from our hypothesis, $T(A)$ is an $H$-order bounded subset of $G$. Since $G$ has property (H), $T(A)$ is an order bounded subset of $G$ and therefore $T \in L_b(E,G)$.
\item It is obvious by (1) and (2).
\end{enumerate}
\end{proof}
\begin{corollary}\label{cor13}
	Let $E$ and $G$ be $F$-Dedekind complete and $H$-Dedekind complete, respectively, then $L_b(E,G)={L}_{FHb}(E,G)$.
\end{corollary}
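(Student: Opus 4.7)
The plan is to reduce the corollary to part (4) of Theorem \ref{theo5}, which already establishes $L_b(E,G) = L_{FHb}(E,G)$ under the assumption that $E$ has property $(F)$ and $G$ has property $(H)$. Hence it suffices to show the implications
\[
E \text{ is } F\text{-Dedekind complete} \ \Longrightarrow\ E \text{ has property } (F),
\]
and the analogous statement for $G$ relative to $H$.

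To establish the first implication, I would take an arbitrary $F$-order bounded subset $A\subseteq E$; by definition this means that $A$ is order bounded in $F$, so there exist $u,v\in F$ with $u\le a\le v$ for every $a\in A$. Since $A$ is bounded above in $F$ by $v$, the $F$-Dedekind completeness of $E$ yields that $\sup A$ exists in $E$. Applying the same property to the set $-A$ (which is bounded above in $F$ by $-u$), I obtain that $\sup(-A) = -\inf A$ exists in $E$, hence $\inf A$ exists in $E$ as well. Therefore $A$ is order bounded in $E$, which is precisely property $(F)$. The argument for $G$ and $H$ is verbatim the same, so $G$ has property $(H)$.

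With both property $(F)$ and property $(H)$ in force, part (4) of Theorem \ref{theo5} gives $L_b(E,G) = L_{FHb}(E,G)$ directly.

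There is no real obstacle here; the only thing to be careful about is the symmetric treatment of upper and lower bounds, which is handled by the standard trick of passing to $-A$. The content of the corollary is just the observation that $F$-Dedekind completeness is a stronger condition than property $(F)$, so the hypotheses of Theorem \ref{theo5}(4) are automatically satisfied.
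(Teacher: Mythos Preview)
Your proposal is correct and matches the paper's intended argument: the corollary is placed immediately after Theorem~\ref{theo5} with no proof, so it is meant to follow from part~(4) once one observes that $F$-Dedekind completeness of $E$ forces property~$(F)$ (and likewise for $G$ and $H$). Your verification of that implication via $\sup A$ and $\sup(-A)$ is exactly the missing step, and there is nothing more to add.
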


\begin{corollary}
	Let  $E$ be an $F$-Dedekind complete ideal of $F$. Assume also $G$ is an $H$-Dedekind complete ideal of $H$. Then  $L_{FHn}(E,G)$ and $L_{FHc}(E,G)$ are both bands of $L_{FHb}(E,G)$. 
\end{corollary}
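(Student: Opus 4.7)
The plan is to reduce the statement to the classical result that $L_n(E,G)$ and $L_c(E,G)$ are bands of $L_b(E,G)$ whenever $G$ is Dedekind complete (see, e.g., Aliprantis--Burkinshaw, Theorem 1.57). To do this I need two identifications: an identification of the order-bounded operator class with the $FH$-order bounded one, and identifications of $L_{FHn}(E,G)$ with $L_n(E,G)$ and of $L_{FHc}(E,G)$ with $L_c(E,G)$.

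For the first identification, Corollary \ref{cor13} gives exactly $L_b(E,G) = L_{FHb}(E,G)$ under the hypothesis that $E$ and $G$ are $F$-Dedekind complete and $H$-Dedekind complete, respectively. For the second, observe that the earlier remark implies $G$ is Dedekind complete, which we will need. Lemma \ref{theo3}(2) already asserts $L_{FHc}(E,G) = L_c(E,G)$ when $E$ and $G$ are ideals of $F$ and $H$. I would then carry out the same argument for nets in place of sequences: if $T \in L_{FHn}(E,G)$ and $x_\alpha \xrightarrow{o} 0$ in $E$ with $\{x_\alpha\}$ order bounded in $E$, then Corollary \ref{cor1} gives $x_\alpha \xrightarrow{Fo} 0$, hence $Tx_\alpha \xrightarrow{Ho} 0$, and then Corollary \ref{cor1} applied once more (using that $T$ is order bounded by Lemma \ref{theo3}, so $\{Tx_\alpha\}$ is order bounded in $G$) yields $Tx_\alpha \xrightarrow{o} 0$; the converse direction is symmetric. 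Thus $L_{FHn}(E,G) = L_n(E,G)$.

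Combining these equalities with the classical theorem that $L_n(E,G)$ and $L_c(E,G)$ are bands in $L_b(E,G)$ whenever $G$ is Dedekind complete, the conclusion that $L_{FHn}(E,G)$ and $L_{FHc}(E,G)$ are bands of $L_{FHb}(E,G)$ follows at once.

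I expect the substantive checking to be the second identification. The $FH$-to-classical direction is straightforward from Corollary \ref{cor1}, but the other direction requires knowing that a net order convergent in $E$ can be assumed to lie in an order interval of $E$ (so Corollary \ref{cor1} applies), and that the image net is order bounded in $G$ so that Corollary \ref{cor1} can be reapplied; the latter point is where Lemma \ref{theo3}(2) providing order boundedness of $T$ is essential. Once these identifications are cleanly in place, the band property is a direct transport of the classical result, and no additional computation is needed.
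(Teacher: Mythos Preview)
Your proposal is correct and follows essentially the same approach as the paper: both invoke Corollary~\ref{cor13} and Lemma~\ref{theo3}(2) to reduce to the classical band theorem (Theorem~1.57 in Aliprantis--Burkinshaw). The only difference is presentational: you make the identifications $L_{FHn}(E,G)=L_n(E,G)$ and $L_{FHc}(E,G)=L_c(E,G)$ explicit and then cite the classical result directly, whereas the paper merely says the remaining argument ``has a similar argument like as Theorem~1.57''; your version is arguably cleaner but not a different route.
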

\begin{proof}
       Corollary \ref{cor13} and part 2 of Lemma \ref{theo3}, show that  $L_{FHn}(E,G)$ and $L_{FHc}(E,G)$ are both subspaces of $L_{FHb}(E,G)$ and the rest of the proof has a similar argument like as Theorem 1.57 \cite{aliprantis2006positive}. 
\end{proof}





\end{document}